\documentclass[12pt, dvipdfmx]{article}
\usepackage{ascmac}
\usepackage{amsmath,amssymb,nccmath}
\usepackage{mdframed, tikz,tikz-cd,graphicx}
\usepackage[all]{xy}
\usepackage{comment}

\usepackage{theorem}
\theoremstyle{break}
\theorembodyfont{\rmfamily}
\usepackage{latexsym}
\def\oqed{\hfill $\Box$}

\newtheorem{defi}{定義}[section]

\newtheorem{thm}[defi]{Theorem}
\newtheorem{prop}[defi]{Proposition}

\newtheorem{lem}[defi]{Lemma}

\newtheorem{proof}{\bf{Proof.}}


\setcounter{section}{-1}


\newcommand{\phiGamma}[1]{
(\varphi ,\Gamma_{#1} )
}
\newcommand{\phiqGamma}[1]{
(\varphi_{q} ,\Gamma_{#1} )
}
\newcommand{\Robba}[1]{
\mathbf{B}^{\dag}_{F, #1}
}
\newcommand{\RobbaB}{
\mathbf{B}^{\dag}_{\widehat{F}_{{\rm unr}}}
}
\newcommand{\catMod}[1]{
{\rm Mod}_{/\mathbf{B}_{F, #1}^{\dag}}^{\varphi_{q}, \Gamma_{#1}, \acute{e}t}
}

\title{Overconvergent Lubin--Tate $(\varphi, \Gamma)$-modules for different uniformizers}
\author{Yuta Saito
\thanks{ysaito@ms.u-tokyo.ac.jp}
}

\begin{document}

\maketitle

\begin{abstract}
\noindent
A\textsc{bstract}. 
Let $F$ be a non-archimedean local field. 
The construction of Lubin--Tate $\phiqGamma{}$-modules attached to $p$-adic representations of $G_F$ depends on the choice of a uniformizer of $F$. 
In this paper, we give a description of a functor which relates categories of overconvergent Lubin--Tate $\phiqGamma{}$-modules for different uniformizers. 
Further, we study this functor more explicitly for $2$-dimensional trianguline representations. 
\end{abstract}

\section{Introduction}

Let $K$ be a finite extension of $\mathbf{Q}_{p}$. Fontaine introduced the theory of cyclotomic $\phiGamma{}$-modules in \cite{Fo} to classify $p$-adic representation of $G_{K}$ where $\Gamma = {\rm Gal}(K_{\infty}/K)$ which $K_{\infty}$ is the field generated by $p$-power roots of $1$ over $K$ called the cyclotomic extension. 
He uses a big ring called the ring of periods to attach  $\phiGamma{}$-modules to $p$-adic representations of $G_{K}$. 
There is an important fact that these cyclotomic $\phiGamma{}$-modules are always overconvergent which is proved by Cherbonnier and Colmez \cite{CC}; this allows us to relate Fontaine's $\phiGamma{}$-modules and $p$-adic Hodge theory.

Cyclotomic $\phiGamma{}$-modules play an essential role in the construction of the $p$-adic local Langlands correspondence for ${\rm GL}_{2}(\mathbf{Q}_{p})$ (see \cite{Breuil}). 
In order to generalize this correspondence to ${\rm GL}_{2}(F)$, where $F$ is a finite extension of $\mathbf{Q}_{p}$, it seems necessary to extend the theory of cyclotomic $\phiGamma{}$-module in some manner. 
One of the expected way is considering Lubin--Tate $\phiGamma{}$-modules for which $\Gamma = {\rm Gal}(K_{\infty, \pi}/K)$, where $F \subset K$ and $K_{\infty, \pi}$ is generated by the torsion points of a Lubin--Tate group attached to a uniformizer $\pi$ of $F$. 
For sake of simplicity, we assume $K=F$ in this paper. 
We can attach to each $F$-linear representatioin of $G_{F}$ a Lubin--Tate $(\varphi, \Gamma)$-module over a certain field $\mathbf{B}_{K, \pi}$ (see \cite{KR}). 
However, these $\phiGamma{}$-modules are usually not overconvergent contrary to the cyclotomic case (see \cite{FX}). 
Relate to this issue, Berger developed an approach by considering $\phiGamma{}$-modules with coefficients in rings of pro-analytic vectors and  proved that $F$-analytic representations are overconvergent (see \cite{Ber}).

We have another problem of the theory of Lubin--Tate $\phiGamma{}$-modules that a category of Lubin--Tate $\phiGamma{}$-modules depends on the choice of a uniformizer of $F$. 
Let $V$ be an $F$-linear representation of $G_{F}$ where $F$ is a finite extentsion of $\mathbf{Q}_{p}$. 
The construction of Lubin--Tate $\phiGamma{}$-modules, especially the construction of a Lubin--Tate extension, depends on the choice of a uniformizer $\pi$ of $F$ so that the overconvergency of $V$ may depend on the choice of $\pi$. 
We say that $V$ is $\pi$-overconvergent if $V$ is overconvergent for Lubin--Tate $\phiGamma{}$-modules for the uniformizer $\pi$. 
If $V$ is $\pi$-overconvergent, then we can attach to $V$ a Lubin--Tate $\phiGamma{}$-module $D_{\pi}^{\dag}$ over a subfield $\mathbf{B}_{F, \pi}^{\dag}$ of $\mathbf{B}_{F, \pi}$ consisting of overconvergent elements. 
Thus if we want to compare $\pi$-overconvergence and $\pi'$-overconvergence for different uniformizers $\pi$ and $\pi'$ of $F$, we should compare Lubin--Tate $\phiGamma{}$-modules over $\mathbf{B}_{F, \pi}^{\dag}$ and Lubin--Tate $\phiGamma{}$-modules over $\mathbf{B}_{F, \pi'}^{\dag}$. Let $H_{\pi} = {\rm Gal}(\overline{F} / F_{\infty, \pi})$ where $\overline{F}$ is an algebraic closure of $F$ and $F_{\infty, \pi}$ is a Lubin--Tate extension of $F$ for a uniformizer $\pi$. 
To compare Lubin--Tate $\phiGamma{}$-modules for different uniformizers, we consider the functor
\[
\mathbf{M}_{\pi, \pi'} : D_{\pi}^{\dag} \mapsto (\mathbf{B}^{\dag} \otimes_{\mathbf{B}_{F, \pi}^{\dag}} D_{\pi}^{\dag})^{H_{\pi'}},
\]
where $\mathbf{B}^{\dag}$ be one of the big fields of $p$-adic periods such that $(\mathbf{B}^{\dag})^{H_{\pi}} = \mathbf{B}_{F, \pi}^{\dag}$.
This functor has the following compatibility which is proved by easy calculation from the result of \cite{FX} (Proposition \ref{compatibilityofM}). 

\begin{prop}
If $V$ is $\pi$-overconvergent and $\pi'$-overconvergent, then corresponding Lubin--Tate $\phiGamma{}$-modules $D_{\pi}^{\dag}$, $D_{\pi'}^{\dag}$ satisfy $D_{\pi}^{\dag} = \mathbf{M}_{\pi, \pi'}(D_{\pi}^{\dag})$. 
\end{prop}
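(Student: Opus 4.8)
The statement asserts that the $\phiqGamma{\pi'}$-module $\mathbf{M}_{\pi,\pi'}(D_\pi^\dag)$ coincides with $D_{\pi'}^\dag$, and the plan is to deduce this from a single $G_F$-equivariant comparison isomorphism by taking $H_{\pi'}$-invariants. Unwinding the definitions, $\pi$-overconvergence of $V$ is the statement that the natural $\mathbf{B}^\dag$-linear map $\iota\colon \mathbf{B}^\dag \otimes_{\Robba{\pi}} D_\pi^\dag \to \mathbf{B}^\dag \otimes_F V$, $b \otimes d \mapsto b\,d$, is an isomorphism, where $D_\pi^\dag$ is regarded inside $\mathbf{B}^\dag \otimes_F V$ via $D_\pi^\dag = (\mathbf{B}^\dag \otimes_F V)^{H_\pi}$ and $b\,d$ denotes the action of $b$ on the first tensor factor; likewise $D_{\pi'}^\dag = (\mathbf{B}^\dag \otimes_F V)^{H_{\pi'}}$. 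On $\mathbf{B}^\dag \otimes_F V$ the group $G_F$ acts diagonally (through its Galois action on $\mathbf{B}^\dag$ and the given action on $V$) and $\varphi_q$ acts through $\mathbf{B}^\dag$; on $\mathbf{B}^\dag \otimes_{\Robba{\pi}} D_\pi^\dag$ one lets $g\in G_F$ act by $g(b\otimes d)=g(b)\otimes\overline g(d)$, where $\overline g$ is the image of $g$ in $\Gamma_\pi=G_F/H_\pi$ and $\Gamma_\pi$ acts on $D_\pi^\dag$ through its $\phiqGamma{\pi}$-structure. This action is well defined over $\Robba{\pi}$ precisely because $\Robba{\pi}=(\mathbf{B}^\dag)^{H_\pi}$, so that the $G_F$-action on these scalars factors through $\Gamma_\pi$ and agrees with the one used to form $D_\pi^\dag$.

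First I would check that $\iota$ is equivariant for both $G_F$ and $\varphi_q$. This is immediate: the $\Gamma_\pi$-action on $D_\pi^\dag = (\mathbf{B}^\dag \otimes_F V)^{H_\pi}$ is by construction the restriction of the diagonal $G_F$-action, and $\varphi_q$ on $\mathbf{B}^\dag$ commutes with $G_F$, so one only expands both sides of $\iota(g(b\otimes d))=g(\iota(b\otimes d))$ and of the analogous identity for $\varphi_q$; this is the ``easy calculation'' alluded to in the text. Next I would apply $(-)^{H_{\pi'}}$ to $\iota$: the source becomes $\mathbf{M}_{\pi,\pi'}(D_\pi^\dag)=(\mathbf{B}^\dag \otimes_{\Robba{\pi}} D_\pi^\dag)^{H_{\pi'}}$ by the very definition of the functor, and the target becomes $(\mathbf{B}^\dag \otimes_F V)^{H_{\pi'}}=D_{\pi'}^\dag$, where $\pi'$-overconvergence is what ensures that this module of invariants is the overconvergent Lubin--Tate $\phiqGamma{\pi'}$-module of $V$, i.e.\ is free of rank $\dim_F V$ over $\Robba{\pi'}=(\mathbf{B}^\dag)^{H_{\pi'}}$ and recovers $D_{\pi'}$ after base change to $\mathbf{B}_{F,\pi'}$. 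Since $\iota$ is $G_F$- and $\varphi_q$-equivariant, the induced isomorphism $\mathbf{M}_{\pi,\pi'}(D_\pi^\dag)\cong D_{\pi'}^\dag$ automatically intertwines $\varphi_q$ and the residual $\Gamma_{\pi'}=G_F/H_{\pi'}$-actions, hence is an isomorphism of $\phiqGamma{\pi'}$-modules over $\Robba{\pi'}$, which is the assertion.

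The part doing the real work is not in these manipulations but in the input they rest on, namely that $\pi$-overconvergence supplies $\iota$ as an isomorphism with invariant subring \emph{exactly} $\Robba{\pi}$, and that a single big field $\mathbf{B}^\dag$ serves both Lubin--Tate towers, with $(\mathbf{B}^\dag)^{H_\pi}=\Robba{\pi}$ and $(\mathbf{B}^\dag)^{H_{\pi'}}=\Robba{\pi'}$; this is precisely the content borrowed from \cite{FX}. One also needs the descent fact $(\mathbf{B}^\dag \otimes_{\Robba{\pi}} D_\pi^\dag)^{H_\pi}=D_\pi^\dag$, and its analogue for $\pi'$, so that passing to invariants genuinely returns the small modules rather than something larger. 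The one bookkeeping subtlety I would flag is to confirm that the $H_{\pi'}$-action implicit in the definition of $\mathbf{M}_{\pi,\pi'}$ on $\mathbf{B}^\dag \otimes_{\Robba{\pi}} D_\pi^\dag$ is indeed $g(b\otimes d)=g(b)\otimes\overline g(d)$ --- equivalently, the action transported from $\mathbf{B}^\dag \otimes_F V$ along $\iota$ --- since it is exactly the compatibility of the $\Gamma_\pi$-action on $D_\pi^\dag$ with the restriction of the diagonal action on $\mathbf{B}^\dag \otimes_F V$ that drives the argument.
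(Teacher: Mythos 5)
Your proposal is correct and follows essentially the same route as the paper: the paper likewise invokes Proposition \ref{prop-overconvergentcatequivalence} to write $V=(\mathbf{B}^{\dag}\otimes_{\mathbf{B}_{F,\pi}^{\dag}}D_{\pi}^{\dag})^{\varphi_{q}=1}$ and $D_{\pi'}^{\dag}=(\mathbf{B}^{\dag}\otimes_{F}V)^{H_{\pi'}}$, and then identifies $\mathbf{B}^{\dag}\otimes_{F}V$ with $\mathbf{B}^{\dag}\otimes_{\mathbf{B}_{F,\pi}^{\dag}}D_{\pi}^{\dag}$ before taking $H_{\pi'}$-invariants, which is exactly your comparison isomorphism $\iota$ phrased slightly differently. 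Your added verification of $G_{F}$- and $\varphi_{q}$-equivariance is the ``easy calculation'' the paper leaves implicit, so no gap on either side.
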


If we want to calculate $\mathbf{M}_{\pi, \pi'}$, it is hard to do with present formulation of $\mathbf{M}_{\pi, \pi'}$. 
In order to carry out this calculation easier, we introduce a new ring of periods $\mathbf{B}_{\widehat{F}_{{\rm unr}}}^{\dag}$ which is smaller than $\mathbf{B}^{\dag}$ and give simpler description of $\mathbf{M}_{\pi, \pi'}$. 
The ring $\mathbf{B}_{\widehat{F}_{\rm unr}}^{\dag}$ is the field of overconvergent power serieses with coefficient in $\widehat{F}_{{\rm unr}}$, where $\widehat{F}_{{\rm unr}}$ is the $\pi$-adic completion of the maximal unramified extension of $F$. 
We take a topological generator $\sigma_{\pi'}$ of ${\rm Gal}(F_{{\rm unr}, \infty}/F_{\infty, \pi'})$ such that $\sigma_{\pi'} |_{F_{{\rm unr}}} = \varphi_{q}$, where $F_{{\rm unr}, \infty} = F_{{\rm unr}} \cdot F_{\infty, \pi'}$. 
Let $\chi_{\pi}$ be a Lubin--Tate character for $\pi$ and let $u \in \mathcal{O}_{F}^{\times}$ be the unit of $\mathcal{O}_{F}$ such that $\pi' = u\pi$. 
Our result is the following (see Theorem \ref{Maintheorem}).

\begin{thm} \label{introMainthm}
We have $\mathbf{M}_{\pi, \pi'}(D_{\pi}^{\dag}) = (\mathbf{B}_{\widehat{F}_{{\rm unr}}}^{\dag} \otimes_{\mathbf{B}_{F, \pi}^{\dag}} D_{\pi}^{\dag})^{\sigma_{\pi'} \otimes \chi_{\pi}^{-1}(u) = 1}$.
\end{thm}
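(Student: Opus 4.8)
The plan is to eliminate the group $H_{\pi'}$ in $\mathbf{M}_{\pi, \pi'}$ in favour of the single element $\sigma_{\pi'}$, by first descending the invariants along the subgroup $G^{\mathrm{ab}} := \mathrm{Gal}(\overline{F}/F^{\mathrm{ab}})$ and then passing from $\mathbf{B}^{\dag}$ to the smaller ring $\mathbf{B}^{\dag}_{\widehat{F}_{\mathrm{unr}}}$. First I would record the group theory: by Lubin--Tate theory $F^{\mathrm{ab}} = F_{\mathrm{unr}}\cdot F_{\infty, \pi} = F_{\mathrm{unr}}\cdot F_{\infty, \pi'}$, so $F_{\mathrm{unr}, \infty} = F^{\mathrm{ab}}$, the group $G^{\mathrm{ab}}$ is normal in $G_F$ and contained in $H_{\pi}\cap H_{\pi'}$, and, since $F_{\infty, \pi'}/F$ is totally ramified, $H_{\pi'}/G^{\mathrm{ab}} = \mathrm{Gal}(F^{\mathrm{ab}}/F_{\infty, \pi'})\cong\mathrm{Gal}(F_{\mathrm{unr}}/F)$ is procyclic with $\sigma_{\pi'}$ a topological generator (because $\sigma_{\pi'}|_{F_{\mathrm{unr}}} = \varphi_q$).

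Next I would compute the $G^{\mathrm{ab}}$-invariants of $\mathbf{B}^{\dag}\otimes_{\mathbf{B}^{\dag}_{F, \pi}}D^{\dag}_{\pi}$. Since $G^{\mathrm{ab}}\subseteq H_{\pi}$ acts trivially on $D^{\dag}_{\pi}$, it acts on this tensor product only through the factor $\mathbf{B}^{\dag}$, and as $D^{\dag}_{\pi}$ is a finite-dimensional vector space over the field $\mathbf{B}^{\dag}_{F, \pi}$ the formation of $G^{\mathrm{ab}}$-invariants commutes with $-\otimes_{\mathbf{B}^{\dag}_{F, \pi}}D^{\dag}_{\pi}$; hence
\[
\bigl(\mathbf{B}^{\dag}\otimes_{\mathbf{B}^{\dag}_{F, \pi}}D^{\dag}_{\pi}\bigr)^{G^{\mathrm{ab}}} = (\mathbf{B}^{\dag})^{G^{\mathrm{ab}}}\otimes_{\mathbf{B}^{\dag}_{F, \pi}}D^{\dag}_{\pi} = \mathbf{B}^{\dag}_{\widehat{F}_{\mathrm{unr}}}\otimes_{\mathbf{B}^{\dag}_{F, \pi}}D^{\dag}_{\pi},
\]
the last equality being the identification $(\mathbf{B}^{\dag})^{G^{\mathrm{ab}}} = \mathbf{B}^{\dag}_{\widehat{F}_{\mathrm{unr}}}$ coming from the construction of $\mathbf{B}^{\dag}_{\widehat{F}_{\mathrm{unr}}}$. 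Taking further invariants under the quotient $H_{\pi'}/G^{\mathrm{ab}}$, and using continuity of the Galois action so that invariance under $\sigma_{\pi'}$ is the same as invariance under $\overline{\langle\sigma_{\pi'}\rangle} = H_{\pi'}/G^{\mathrm{ab}}$, I obtain $\mathbf{M}_{\pi, \pi'}(D^{\dag}_{\pi}) = \bigl(\mathbf{B}^{\dag}_{\widehat{F}_{\mathrm{unr}}}\otimes_{\mathbf{B}^{\dag}_{F, \pi}}D^{\dag}_{\pi}\bigr)^{\sigma_{\pi'} = 1}$, where $\sigma_{\pi'}$ acts diagonally: on $\mathbf{B}^{\dag}_{\widehat{F}_{\mathrm{unr}}} = (\mathbf{B}^{\dag})^{G^{\mathrm{ab}}}$ through $\mathrm{Gal}(F^{\mathrm{ab}}/F)$, and on $D^{\dag}_{\pi}$ through its image $\sigma_{\pi'}|_{F_{\infty, \pi}}$ in $\Gamma_{\pi} = \mathrm{Gal}(F_{\infty, \pi}/F)$.

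Finally I would identify this diagonal action with the operator $\sigma_{\pi'}\otimes\chi_{\pi}^{-1}(u)$ of the statement. By the Lubin--Tate reciprocity law, $\sigma_{\pi'}$ is the restriction to $F^{\mathrm{ab}}$ of the Artin symbol of $\pi' = u\pi$; writing it as the product of the symbols of $u$ and of $\pi$ and using that the symbol of $\pi$ fixes $F_{\infty, \pi}$, its image in $\Gamma_{\pi}$ is exactly $\chi_{\pi}^{-1}(u)$. The same computation modulo $\mathrm{Gal}(F^{\mathrm{ab}}/F_{\infty, \pi})$ shows that $\sigma_{\pi'}$ and $\chi_{\pi}^{-1}(u)$ act in the same way on $\mathbf{B}^{\dag}_{F, \pi}$ (which is fixed by $\mathrm{Gal}(F^{\mathrm{ab}}/F_{\infty, \pi})$), so $\sigma_{\pi'}\otimes\chi_{\pi}^{-1}(u)$ is a well-defined operator on the tensor product over $\mathbf{B}^{\dag}_{F, \pi}$ and coincides with the diagonal action of $\sigma_{\pi'}$ just described. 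Combining this with the previous paragraph proves the theorem.

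The step I expect to be the real obstacle is the identification $(\mathbf{B}^{\dag})^{\mathrm{Gal}(\overline{F}/F^{\mathrm{ab}})} = \mathbf{B}^{\dag}_{\widehat{F}_{\mathrm{unr}}}$: that enlarging the invariants from $H_{\pi}$ to $G^{\mathrm{ab}}$ inside the big ring of periods produces exactly the overconvergent power series with coefficients in $\widehat{F}_{\mathrm{unr}}$ and nothing more. This is where the structure of the period rings — and, in an essential way, overconvergence — is genuinely used, and I would expect it to be established beforehand, identifying $(\mathbf{B}^{\dag})^{G^{\mathrm{ab}}}$ with the \'etale $\widehat{\mathbf{Z}}$-extension of $\mathbf{B}^{\dag}_{F, \pi}$ attached to $F_{\mathrm{unr}}/F$. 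The remaining ingredients — the Lubin--Tate description of $F^{\mathrm{ab}}$, procyclicity of $\mathrm{Gal}(F^{\mathrm{ab}}/F_{\infty, \pi'})$, finite-dimensionality of $D^{\dag}_{\pi}$, continuity of the action, and the reciprocity computation of $\sigma_{\pi'}|_{F_{\infty, \pi}}$ — are routine.
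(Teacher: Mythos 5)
Your proposal follows essentially the same route as the paper: it splits the $H_{\pi'}$-invariants through $\mathrm{Gal}(\overline{F}/F_{{\rm unr},\infty})$, invokes $(\mathbf{B}^{\dag})^{\mathrm{Gal}(\overline{F}/F^{\mathrm{ab}})} = \mathbf{B}^{\dag}_{\widehat{F}_{{\rm unr}}}$ (exactly the paper's Lemma \ref{galoistheoryofringofperiods}, proved there by an Ax--Sen--Tate style argument), and then identifies the residual procyclic action via the reciprocity map, just as the paper does with $r(\pi') = r(u)r(\pi)$. The key identification you flag as the real obstacle is indeed isolated and proved separately in the paper, so your outline is correct and matches its structure (up to the paper's own $u$ versus $u^{-1}$ normalization discrepancy between Theorem \ref{introMainthm} and Theorem \ref{Maintheorem}).
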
 

If $V$ is $\pi$-overconvergent and $D_{\pi}^{\dag}$ denotes the corresponding overconvergent Lubin--Tate $\phiGamma{}$-module, we can check the $\pi'$-overconvergency of $V$ by calculating $\mathbf{M}_{\pi, \pi'}(D_{\pi}^{\dag})$. 
We focus on carrying out the calculation of $\mathbf{M}_{\pi, \pi'}$ for $2$-dimensional triangulin representations and we conclude that overconvergency is independet of the choice of uniformizers in this case.

\vspace{12pt}
\noindent
A\textsc{cknowledgments}: 
I would like to thank my academic supervisor, Associate Professor Naoki Imai, for many useful discussions and crucial suggestions, and for spending time reviewing early drafts of this paper.

\section{Lubin--Tate Extensions}

Throughout this article, $F$ is a finite extension of $\mathbf{Q}_{p}$ with ring of integers $\mathcal{O}_{F}$ and residue field $k_{F}$. 
Let $q = p^{h}$ be the cardinality of $k_{F}$ and let $e$ be the ramification index of $F$ over $\mathbf{Q}_{p}$. 
We identify $k_{F}$ with $\mathbf{F}_{q}$. 

Let $\pi$ be a uniformizer of $F$. 
Let $\mathcal{G}_{\pi}$ be a Lubin--Tate formal group attached to $[\pi]_{\pi}(T) = \pi T + T^{q}$. For $a \in \mathcal{O}_{F}$, let $[a]_{\pi}(T)$ denote the power series that gives the multiplicaion-by-$a$ map on $\mathcal{G}_{\pi}$. Let $F_{n, \pi} = F(\mathcal{G}_{\pi}[\pi^{n}])$, and let $F_{\infty , \pi} = \bigcup_{n\geq 1}F_{n, \pi}$. 
Let $H_{\pi} = {\rm Gal}(\overline{F} / F_{\infty , \pi})$ and $\Gamma_{\pi} = {\rm Gal}(F_{\infty , \pi} / F)$. 
By Lubin--Tate theory (see \cite[Chapter VI, section 3.4]{ANT}), $\Gamma_{\pi}$ is isomorphic to $\mathcal{O}_{F}^{\times}$ via the Lubin--Tate character $\chi_{\pi} : \Gamma_{\pi} \to \mathcal{O}_{F}^{\times}$. 

Let $u_{\pi, 0} = 0$ and for each $n\geq 1$, let $u_{\pi, n} \in \overline{\mathbf{Q}}_p$ be such that $[\pi]_{\pi}(u_{\pi, n}) = u_{\pi, n-1}$ with $u_{\pi, 1} \neq 0$.

These Lubin--Tate extensions and its Galois groups are dependent on the choice of the uniformizer $\pi$. 
Let $\pi$, $\pi' $ be two uniformizers of $F$ and let $u \in \mathcal{O}_{F}^{\times}$ be the unit of $\mathcal{O}_{F}$ such that $\pi' = u\pi$. 
There is a relationship between two Lubin--Tate extensions of $F$ as following (see \cite[Chapter VI, Section 3.7]{ANT}). 

\begin{lem} \label{lem-formalgroup}
Let $F_{{\rm unr}}$ be the maximal unramified extension of $F$ and $\widehat{F}_{{\rm unr}}$ be the completion of $F_{{\rm unr}}$. 
Let $\sigma_{q} \in {\rm Gal}(F_{{\rm unr}} / F)$ be the arithmetic Frobenius automorphisms and extend it to $\widehat{F}_{{\rm unr}}$ by continuity. 
Then there exists a power series $\phi \in \mathcal{O}_{\widehat{F}_{{\rm unr}}}[[X]]$ with $\phi(X) \equiv \varepsilon X \mod (X^{2})$ where $\varepsilon$ is a unit of $\mathcal{O}_{\widehat{F}_{{\rm unr}}}$, such that
\begin{enumerate}
\item $\phi$ is a morphism from $\mathcal{G}_{\pi}$ to $\mathcal{G}_{\pi'}$ as formal $\mathcal{O}_F$-modules;
\item $^{\sigma_{q}}\phi = \phi \circ [u]_{\pi}$. 
\end{enumerate}
\end{lem}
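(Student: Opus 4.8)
The plan is to follow the classical change-of-uniformizer construction underlying \cite[Chapter VI, Section 3.7]{ANT}: one builds $\phi$ over $\mathcal{O}_{\widehat{F}_{{\rm unr}}}$ as the unique power series with a prescribed linear term satisfying a Frobenius-twisted functional equation, and then reads off (1) and (2) from the uniqueness of such solutions. Write $R = \mathcal{O}_{\widehat{F}_{{\rm unr}}}$; since $F_{{\rm unr}}/F$ is unramified, $\pi$ is still a uniformizer of $R$, the residue field of $R$ is $\overline{\mathbf{F}}_q$, which is algebraically closed, and $\sigma_q$ lifts the $q$-power Frobenius of $\overline{\mathbf{F}}_q$. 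First I would choose $\varepsilon \in R^{\times}$ with $\sigma_q(\varepsilon) = u\varepsilon$. This is the one piece of genuinely nontrivial input: it is a Hilbert-90-type statement for $\widehat{F}_{{\rm unr}}/F$, proved by successive approximation. Modulo $\pi$ it reads $\overline{\varepsilon}^{\,q-1} = \overline{u}$, which is solvable in $\overline{\mathbf{F}}_q^{\times}$; and an approximate solution modulo $\pi^{n}$ is corrected by a factor $1 + \pi^{n}\delta$, reducing one to an Artin--Schreier equation $\overline{\delta}^{\,q} - \overline{\delta} = \overline{c}$ over $\overline{\mathbf{F}}_q$. The procedure converges because $R$ is $\pi$-adically complete.

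Next I would construct the unique $\phi \in R[[X]]$ with $\phi(X) \equiv \varepsilon X \bmod X^{2}$ satisfying
\[
{}^{\sigma_q}\phi \circ [\pi]_{\pi} = [\pi']_{\pi'} \circ \phi \qquad (\star)
\]
by determining its coefficients one degree at a time. Assuming $(\star)$ holds modulo $X^{d+1}$, comparing coefficients of $X^{d+1}$ shows that the next coefficient $c \in R$ must satisfy $u\pi\, c - \pi^{d+1}\sigma_q(c) = b_{d+1}$ with $b_{d+1} \in R$ determined by the coefficients already chosen. Two observations make this uniquely solvable: first, $b_{d+1} \in \pi R$, because modulo $\pi$ one has $[\pi]_{\pi}(X) \equiv X^{q} \equiv [\pi']_{\pi'}(X)$ and ${}^{\sigma_q}\bigl(\sum c_j X^{j}\bigr)(X^{q}) \equiv \bigl(\sum c_j X^{j}\bigr)^{q}$, so $(\star)$ holds identically modulo $\pi$ whatever the coefficients are; second, dividing by $u\pi$ turns the equation into $c - u^{-1}\pi^{d}\sigma_q(c) = (u\pi)^{-1}b_{d+1} \in R$, and for $d \geq 1$ the operator $c \mapsto u^{-1}\pi^{d}\sigma_q(c)$ is topologically nilpotent on $R$, so $\mathrm{id} - u^{-1}\pi^{d}\sigma_q$ is invertible. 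This both produces $\phi$ and shows it is the unique solution of $(\star)$ with linear term $\varepsilon X$; the same argument gives a ``twisted main lemma'' in several variables, namely: for any linear form $L$ with coefficients in $\mathcal{O}_F$ there is a unique power series with linear part $L$ satisfying the corresponding analogue of $(\star)$.

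To prove (1), I would invoke this uniqueness twice. Both $\phi(\mathcal{G}_{\pi}(X,Y))$ and $\mathcal{G}_{\pi'}(\phi(X),\phi(Y))$ have linear part $\varepsilon(X+Y)$, and — using that $\mathcal{G}_{\pi}, \mathcal{G}_{\pi'}, [\pi]_{\pi}, [\pi']_{\pi'}, [a]_{\pi}, [a]_{\pi'}$ all have coefficients in $\mathcal{O}_F$ (hence are fixed by $\sigma_q$), that $[\pi]_{\pi}$ and $[\pi']_{\pi'}$ are endomorphisms of $\mathcal{G}_{\pi}$ and $\mathcal{G}_{\pi'}$, and $(\star)$ — both satisfy the two-variable analogue of $(\star)$, so they agree. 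Likewise $\phi \circ [a]_{\pi}$ and $[a]_{\pi'} \circ \phi$ have linear part $\varepsilon a X$ and both satisfy $(\star)$, so they agree for every $a \in \mathcal{O}_F$; thus $\phi$ is a morphism $\mathcal{G}_{\pi} \to \mathcal{G}_{\pi'}$ of formal $\mathcal{O}_F$-modules. For (2): applying $\sigma_q$ to $(\star)$ and using that $[\pi]_{\pi}, [\pi']_{\pi'}$ are $\sigma_q$-fixed shows that ${}^{\sigma_q}\phi$ again satisfies $(\star)$, now with linear term $\sigma_q(\varepsilon)X = u\varepsilon X$; and $\phi \circ [u]_{\pi} = [u]_{\pi'} \circ \phi$ (equality by (1)) has the same linear term and also satisfies $(\star)$, since $[u]_{\pi'}$ has coefficients in $\mathcal{O}_F$ and commutes with $[\pi']_{\pi'}$. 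By uniqueness, ${}^{\sigma_q}\phi = \phi \circ [u]_{\pi}$.

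The main obstacle is localized in the first step — producing $\varepsilon$, which is exactly where passing to the completed maximal unramified extension and using the algebraic closedness of its residue field are indispensable — together with the verification in the second step that the obstruction at each stage lies in $\pi R$ and that $\mathrm{id} - u^{-1}\pi^{d}\sigma_q$ is invertible on $R$; once the twisted main lemma with its uniqueness clause is in place, the deduction of (1) and (2) is formal.
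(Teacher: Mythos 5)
The paper itself gives no proof of this lemma; it simply quotes it from \cite[Chapter VI, Section 3.7]{ANT}, and your argument is precisely the standard construction found there (solve $\sigma_q(\varepsilon)=u\varepsilon$ by successive approximation over $\mathcal{O}_{\widehat{F}_{{\rm unr}}}$, build $\phi$ degree by degree from the twisted functional equation ${}^{\sigma_q}\phi\circ[\pi]_{\pi}=[\pi']_{\pi'}\circ\phi$, then get (1) and (2) from uniqueness), and it is correct. One small slip: your several-variable ``twisted main lemma'' as stated --- existence and uniqueness for \emph{any} linear form $L$ with coefficients in $\mathcal{O}_F$ --- is false unless $u=1$, since matching linear terms forces $\sigma_q(a)\pi=\pi' a$ for each coefficient $a$; what you actually need (and use) is either linear parts of the form $\varepsilon L$ with $L$ over $\mathcal{O}_F$, or just the uniqueness clause, which holds for any prescribed common linear part and suffices for your deductions of (1) and (2).
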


By local class field theory, we have the next proposition (see \cite[Chapter VI]{ANT}). 

\begin{prop} \label{reciprocitymap}
Let $F_{{\rm unr}, \infty} = F_{{\rm unr}} \cdot F_{\infty, \pi}$. 
Then this is independet of the choice of a uniformizer $\pi$ and there uniquely exists a canonical homomorqhism
\[
r : F^{\times} \to {\rm Gal}(F_{{\rm unr}, \infty}/F)
\]
which satisfies
\begin{enumerate}
\item for any uniformizer $\pi \in F$, $r(\pi)$ is the identity on the $F_{\infty, \pi}$ and is equal to $\sigma_{q}$ on $F_{{\rm unr}}$; 
\item for any $u \in \mathcal{O}_{F}^{\times}$, $r(u)$ is equal to $\chi_{\pi}^{-1}(u^{-1})$ on $F_{\infty, \pi}$ for any uniformizer $\pi \in F$ and is the identity on $F_{{\rm unr}}$. 
\end{enumerate}
The homomorphism $r$ is called \textit{the reciprocity map}. 
\end{prop}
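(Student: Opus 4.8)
The plan is to deduce everything from standard local class field theory, using the fact that the reciprocity map of $F$ identifies $F^\times$ (or its profinite completion $\widehat{F^\times}$) with $\mathrm{Gal}(F^{\mathrm{ab}}/F)$. First I would recall that $F^{\mathrm{ab}} = F_{\mathrm{unr}} \cdot F_{\infty,\pi}$ for \emph{any} uniformizer $\pi$; this is the Lubin--Tate theorem, which simultaneously gives the independence of $F_{\mathrm{unr},\infty}$ from the choice of $\pi$ (the left-hand side $F^{\mathrm{ab}}$ visibly does not see $\pi$). So the composite
\[
r : F^\times \longrightarrow \widehat{F^\times} \xrightarrow{\ \mathrm{Art}_F\ } \mathrm{Gal}(F^{\mathrm{ab}}/F) = \mathrm{Gal}(F_{\mathrm{unr},\infty}/F)
\]
is the natural candidate, and uniqueness of a continuous homomorphism satisfying (1) and (2) is automatic once existence is shown, since $F^\times$ is topologically generated by a single uniformizer together with $\mathcal{O}_F^\times$, on which the conditions pin $r$ down completely.

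Next I would verify properties (1) and (2) by restricting $\mathrm{Art}_F$ to the two subextensions $F_{\mathrm{unr}}$ and $F_{\infty,\pi}$ separately, using that $\mathrm{Gal}(F_{\mathrm{unr},\infty}/F)$ embeds into $\mathrm{Gal}(F_{\mathrm{unr}}/F) \times \mathrm{Gal}(F_{\infty,\pi}/F)$. On the unramified side, the Artin map sends a uniformizer to the arithmetic Frobenius $\sigma_q$ and sends $\mathcal{O}_F^\times$ to the identity --- this is the defining normalization of $\mathrm{Art}_F$ (arithmetic, not geometric, Frobenius). On the Lubin--Tate side, the key input is the explicit description of $\mathrm{Art}_F$ via the Lubin--Tate formal group: for $a \in \mathcal{O}_F^\times$, $\mathrm{Art}_F(a)$ acts on the torsion points $\mathcal{G}_\pi[\pi^n]$ by $[a^{-1}]_\pi$, i.e.\ $\mathrm{Art}_F(a)|_{F_{\infty,\pi}} = \chi_\pi^{-1}(a^{-1})$, while the chosen uniformizer $\pi$ itself acts trivially on $F_{\infty,\pi}$ (its associated Lubin--Tate tower is fixed by $r(\pi)$ by construction). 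Assembling these restrictions gives (1) for the uniformizer $\pi$ and (2) for units, both with respect to \emph{that} $\pi$; the statement as written quantifies over all uniformizers, so I would then note that for a second uniformizer $\pi'=u\pi$ the same argument with the Lubin--Tate group $\mathcal{G}_{\pi'}$ applies verbatim, and consistency between the two descriptions is exactly the content of the $\pi$-independence of $F_{\mathrm{unr},\infty}$ and of $\mathrm{Art}_F$.

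The only genuine subtlety --- the part I would treat most carefully --- is matching the normalization conventions: whether one uses the arithmetic or geometric Frobenius in $\mathrm{Art}_F$, and correspondingly whether $\chi_\pi$ or $\chi_\pi^{-1}$ (and $u$ or $u^{-1}$) appears, so that the signs in (1) and (2) come out consistent with each other. Concretely, the choice ``$r(\pi) = \sigma_q$ on $F_{\mathrm{unr}}$'' forces the arithmetic-Frobenius normalization, and then the Lubin--Tate reciprocity law delivers precisely $r(u) = \chi_\pi^{-1}(u^{-1})$ on $F_{\infty,\pi}$ with no further freedom; I would just cite \cite[Chapter VI]{ANT} for this compatibility rather than rederiving it. Everything else is a formal consequence of $F^\times = \pi^{\mathbf{Z}} \times \mathcal{O}_F^\times$ together with continuity.
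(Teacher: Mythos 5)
Your proposal is correct and matches the paper's approach: the paper offers no independent proof of this proposition, simply invoking local class field theory and citing \cite[Chapter VI]{ANT}, which is exactly the Lubin--Tate reciprocity statement (with the arithmetic-Frobenius normalization forcing $r(u)|_{F_{\infty,\pi}} = \chi_{\pi}^{-1}(u^{-1})$) that you reconstruct. Your added remarks on uniqueness via $F^{\times} = \pi^{\mathbf{Z}} \times \mathcal{O}_{F}^{\times}$ and on the normalization conventions are accurate and consistent with the paper's usage.
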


\section{Rings of $p$-adic Periods}

We recall various rings of $p$-adic periods from \S 3 of \cite{Ber}. 
In this section, we fix a uniformizer $\pi$ of $F$. 
Let $F_{\infty, \pi}$ be the Lubin--Tate extension for $\pi$ of $F$ constructed in \S 1. 
Let 
\[
\widetilde{\mathbf{E}}^{+} = \{(x_0 , x_1 , ...), \text{with } x_n \in \mathcal{O}_{\mathbf{C}_p }/\pi \text{ and } x_{n+1}^q = x_n \text{ for all } n\geq 0 \}.
\] 
This ring is endowed with the valuation ${\rm val}_{\mathbf{E}}(\cdot)$ defined by ${\rm val}_{\mathbf{E}}(x) = \lim_{n \to +\infty}$ $q^{n}{\rm val}_{p}(\hat{x}_n)$, where $\hat{x}_{n} \in \mathcal{O}_{\mathbf{C}_{p}}$ lifts $x_{n}$. 
The ring $\widetilde{E}^{+}$ is complete for ${\rm val}_{\mathbf{E}}(\cdot)$.  
Let $\{u_{\pi, n}\}_{n\geq 0}$ be as in \S 1. 
Then $\overline{u}_{\pi}=(\overline{u}_{\pi,0}, \overline{u}_{\pi. 1}, ...) \in \widetilde{\mathbf{E}}^{+}$ and ${\rm val}_{\mathbf{E}}(\overline{u}_{\pi}) = q/(q-1)e$. 
Let $\widetilde{\mathbf{E}}$ be the fraction field of $\widetilde{\mathbf{E}}^{+}$. 
Note that there is a canonical inclusion  $\overline{\mathbf{F}}_{q} \hookrightarrow \widetilde{\mathbf{E}}^{+}$ such that $a \mapsto (a_0 , a_1 , ...)$ which $a_n = [a^{q^{-n}}]$ mod $\pi \in W_{F}(\overline{\mathbf{F}}_{q})/\pi \simeq \mathcal{O}_{\widehat{F}_{{\rm unr}}}/\pi$ for all $n \geq 0$ and for any $a\in \overline{\mathbf{F}}_{q}$, where $W_{F}(\cdot)$ denotes the functor $\mathcal{O}_{F} \otimes_{\mathcal{O}_{F_{0}}}  W(\cdot)$ of $F$-Witt vectors. 

Let $\widetilde{\mathbf{A}}^{+} = W_{F}(\widetilde{\mathbf{E}}^{+})$, and let $\widetilde{\mathbf{B}}^{+} = \widetilde{\mathbf{A}}^{+}[1/\pi]$. These rings are preserved by the Frobenius map $\varphi_q = {\rm Id} \otimes \varphi^{h}$, where $\varphi$ is the canonical Frobenius map of Witt ring $W(\widetilde{\mathbf{E}}^{+})$. 
Every element of $\widetilde{\mathbf{B}}^{+}[1/[\overline{u}_{\pi}]]$ can be written as $\sum_{k \gg -\infty}\pi^{k}[x_{k}]$, where $\{ x_{k} \}_{k \in \mathbf{Z}}$ is a bounded sequence of $\widetilde{\mathbf{E}}$. 
For $r \geq 0$, define a valuation $V(\cdot, r)$ on $\widetilde{\mathbf{B}}^{+}[1/[\overline{u}_{\pi}]]$ by 
\[
V(x, r) = \inf_{k\in\mathbf{Z}}\left( \frac{k}{e} + \frac{p-1}{pr}{\rm val}_{\mathbf{E}}(x_{k})\right) \quad {\rm if} \  x = \sum_{k \gg \infty} \pi^{k}[x_{k}].
\]
If $I$ is a closed interval of $[0,+\infty)$, then let $V(x,I) = \inf_{r \in I} V(x,r)$. 
The ring $\widetilde{\mathbf{B}}^{I}$ is defined as the completion of $\widetilde{\mathbf{B}}^{+}[1/[\overline{u}_{\pi} ]]$ for the valuation $V(\cdot, I)$ if $0 \notin I$. 
If $I = [0, r]$, then $\widetilde{\mathbf{B}}^{I}$ is the completion of $\widetilde{\mathbf{B}}^{+}$ for $V(\cdot, I)$. Let $\widetilde{\mathbf{B}}^{\dag} = \cup_{r\gg 0} \widetilde{\mathbf{B}}^{[r,+\infty)}$. 

By \cite[Lemma 1.2]{KR}, there uniquely exists $u_{\pi} \in \widetilde{\mathbf{A}}^{+}$ whose image in $\widetilde{\mathbf{E}}^{+}$ is $\overline{u}_{\pi}$ and such that $\varphi_{q}(u_{\pi}) = [\pi]_{\pi}(u_{\pi})$ and $\gamma(u_{\pi}) = [\chi_{\pi}(\gamma)]_{\pi}(u_{\pi})$ if $\gamma \in \Gamma_{\pi}$.

\begin{prop} \label{prop-coordinatetransformation}
Let $\pi$ and $\pi'$ be uniformizers of F and let $\phi$ be a power series defined as Lemma \ref{lem-formalgroup}. 
Let $\{ u_{\pi', n} \}_{n \geq 0}$ and $u_{\pi'}$ be objects constructed above using $\pi'$ instead of $\pi$. 
Then $\{ u_{\pi', n} \}_{n \geq 0}$ coincides with $\{ ^{\sigma_{q}^{-n}}\phi(u_{\pi, n}) \}_{n \geq 0}$ and $\phi(u_{\pi}) = u_{\pi'}$. 
\end{prop}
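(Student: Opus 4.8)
The plan is to verify the two claims in turn, using the characterizing properties of $u_\pi$, $u_{\pi'}$ from \cite[Lemma 1.2]{KR} together with the functional equations for $\phi$ from Lemma \ref{lem-formalgroup}.

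First I would check that $\{{}^{\sigma_q^{-n}}\phi(u_{\pi,n})\}_{n\geq 0}$ is a compatible system of $\pi'$-torsion points, i.e.\ that it satisfies $[\pi']_{\pi'}({}^{\sigma_q^{-n}}\phi(u_{\pi,n})) = {}^{\sigma_q^{-(n-1)}}\phi(u_{\pi,n-1})$ and that the first nonzero term is as required. Applying $\sigma_q^{-n}$ to property (2) of Lemma \ref{lem-formalgroup}, namely ${}^{\sigma_q}\phi = \phi\circ[u]_\pi$, rewrites as ${}^{\sigma_q^{-(n-1)}}\phi = {}^{\sigma_q^{-n}}\phi \circ [u]_\pi$ after reindexing; meanwhile $\phi$ being a morphism $\mathcal{G}_\pi \to \mathcal{G}_{\pi'}$ of formal $\mathcal{O}_F$-modules gives $\phi\circ[a]_\pi = [a]_{\pi'}\circ\phi$ for all $a\in\mathcal{O}_F$, and the same holds for each twist ${}^{\sigma_q^{-n}}\phi$ since $[a]_\pi$, $[a]_{\pi'}$ have coefficients in $\mathcal{O}_F$ which is fixed by $\sigma_q$. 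Combining these with $[\pi']_{\pi'} = [u]_{\pi'}\circ[\pi]_{\pi'}$ (as $\pi'=u\pi$) and $[\pi]_\pi(u_{\pi,n})=u_{\pi,n-1}$ should telescope to the desired recursion; the condition $u_{\pi',1}\neq 0$ follows because $\phi(X)\equiv \varepsilon X \bmod X^2$ with $\varepsilon$ a unit and $u_{\pi,1}\neq 0$. Since the characterizing data (a compatible system of torsion points) determines the $u_{\pi',n}$ uniquely, this identifies the two systems.

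Next I would prove $\phi(u_\pi)=u_{\pi'}$ by showing $\phi(u_\pi)\in\widetilde{\mathbf{A}}^+$ satisfies the three defining properties of $u_{\pi'}$: its image in $\widetilde{\mathbf{E}}^+$ is $\overline{u}_{\pi'}$, it satisfies $\varphi_q(\phi(u_\pi)) = [\pi']_{\pi'}(\phi(u_\pi))$, and $\gamma(\phi(u_\pi)) = [\chi_{\pi'}(\gamma)]_{\pi'}(\phi(u_\pi))$ for $\gamma\in\Gamma_{\pi'}$. The reduction claim follows from the first part of the proposition (reduce $\{{}^{\sigma_q^{-n}}\phi(u_{\pi,n})\}$ mod $\pi$ and compare with $\overline{u}_{\pi'}$, using that $\phi$ has coefficients in $\mathcal{O}_{\widehat F_{\rm unr}}$ whose reduction mod $\pi$ carries the $\sigma_q$-twist to the Frobenius on $\overline{\mathbf{F}}_q$-coefficients, which matches the compatibility built into $\widetilde{\mathbf{E}}^+$). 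For the Frobenius relation: $\varphi_q$ acts on $\widetilde{\mathbf{A}}^+$ via Witt functoriality, so on coefficients of $\phi\in\mathcal{O}_{\widehat F_{\rm unr}}[[X]]\subset \widetilde{\mathbf{A}}^+[[X]]$ it acts as $\sigma_q$; hence $\varphi_q(\phi(u_\pi)) = {}^{\sigma_q}\phi(\varphi_q(u_\pi)) = {}^{\sigma_q}\phi([\pi]_\pi(u_\pi))$, and then Lemma \ref{lem-formalgroup}(2) turns ${}^{\sigma_q}\phi$ into $\phi\circ[u]_\pi$, giving $\phi([u]_\pi([\pi]_\pi(u_\pi))) = \phi([\pi']_\pi(u_\pi)) = [\pi']_{\pi'}(\phi(u_\pi))$ using that $\phi$ intertwines the $\mathcal{O}_F$-actions and $\pi'=u\pi$. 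The $\Gamma_{\pi'}$-equivariance is similar but one must keep track of how $\gamma$ acts on the coefficients of $\phi$; I expect this to follow from the $\sigma_q$-twist relation once one expresses the action of $\gamma$ on $\widehat F_{\rm unr}$ through the reciprocity map of Proposition \ref{reciprocitymap}. Uniqueness in \cite[Lemma 1.2]{KR} then forces $\phi(u_\pi)=u_{\pi'}$.

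The main obstacle will be the bookkeeping in the second part: correctly tracking the $\sigma_q$-twists and the action of $\Gamma_{\pi'}$ on the coefficients of $\phi$ (which live in $\widehat F_{\rm unr}$, not $F$), and making sure the functional equation ${}^{\sigma_q}\phi=\phi\circ[u]_\pi$ is deployed with the correct power of $\sigma_q$ at each step. Everything else is a formal manipulation of Lubin--Tate morphisms and of the Witt-vector Frobenius, but one has to be careful that $\phi$ is not defined over $F$, so $\varphi_q$ and $\gamma$ genuinely move its coefficients and the twisted series ${}^{\sigma_q^{-n}}\phi$ must be handled consistently.
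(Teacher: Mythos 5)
Your proposal is correct and follows essentially the same route as the paper: verify that $\{{}^{\sigma_q^{-n}}\phi(u_{\pi,n})\}$ is a compatible system of $\pi'$-torsion points via ${}^{\sigma_q}\phi=\phi\circ[u]_\pi$ and the $\mathcal{O}_F$-module morphism property, then identify $\phi(u_\pi)$ with $u_{\pi'}$ through the characterization in \cite[Lemma 1.2]{KR}, with the same computation $\varphi_q(\phi(u_\pi))={}^{\sigma_q}\phi(\varphi_q(u_\pi))=\phi([\pi']_\pi(u_\pi))=[\pi']_{\pi'}(\phi(u_\pi))$. The only difference is that you also propose to check $\Gamma_{\pi'}$-equivariance of $\phi(u_\pi)$, which the paper (correctly) omits, since uniqueness in \cite[Lemma 1.2]{KR} already follows from the reduction to $\overline{u}_{\pi'}$ together with the $\varphi_q$-relation, so the step you flag as the main bookkeeping obstacle is in fact unnecessary.
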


\begin{proof}
Remember the definition of the inclusion $\overline{\mathbf{F}}_{q} \hookrightarrow \widetilde{\mathbf{E}}^{+}$. For $a \in \mathcal{O}_{\widehat{F}_{{\rm unr}}}$, 
\[
\sigma_{q}^{-n}(a) \mod \pi = a^{q^{-n}} \mod \pi
\]
in $\mathcal{O}_{\widehat{F}_{{\rm unr}}}/\pi \simeq \overline{\mathbf{F}}_{q}$.
Hence, the image of $\phi(u_{\pi})$ in $\widetilde{\mathbf{E}}^{+}$ is 
\[
( \phi({u}_{\pi, 0}) \text{ mod } \pi, {}^{\sigma_{q}^{-1}}\phi({u}_{\pi, 1}) \text{ mod } \pi , ...).
\]
By Lemma \ref{lem-formalgroup}, $^{\sigma_{q}^{-n}}\phi(u_{\pi, n}) = \phi\circ [u]_{\pi}^{-n}(u_{\pi, n})$. Hence, 
\begin{align}
[\pi']_{\pi'}( &^{\sigma_{q}^{-n}}\phi(u_{\pi, n})) = \phi([\pi']_{\pi}\circ[u]_{\pi}^{-n}u_{\pi, n}) \notag \\
  &= \phi([u]^{-(n-1)}_{\pi}u_{\pi, n-1}) =  {}^{\sigma_{q}^{-(n-1)}}\phi(u_{\pi, n-1}). \notag
\end{align} 
In addition, we can get $\phi(u_{\pi, 0}) = 0$ and $^{\sigma_{q}^{-1}}\phi(u_{\pi, 1}) \neq 0$ by the definition of $\phi$ and invertibility of $\phi$.
Hence, $\{ u_{\pi', n} \}_{n \geq 0}$ coincides with $\{ ^{\sigma_{q}^{-n}}\phi(u_{\pi, n}) \}_{n \geq 0}$. 
By \cite[Lemma 1.2]{KR}, the characterization of $u_{\pi}$ is that it is the lift of $\{ \overline{u}_{\pi, n}\}_{n \geq 0}$ in $\widetilde{A}^{+}$ and satisfies $\varphi_{q}(u_{\pi}) = [\pi]_{\pi}(u_{\pi})$, so we get 
\[
\varphi_{q}(\phi(u_{\pi})) =\  ^{\sigma_{q}}\phi(\varphi_{q}(u_{\pi})) = \phi([u]_{\pi}\circ[\pi]_{\pi}(u_{\pi})) = \phi([\pi']_{\pi}(u_{\pi})) = [\pi']_{\pi'}(\phi(u_{\pi})).
\] 
Accordingly, $\phi(u_{\pi}) = u_{\pi'}$ for $\{ u_{\pi', n} \}_{n \geq 0} = \{ ^{\sigma_{q}^{-n}}\phi(u_{\pi, n}) \}_{n \geq 0}$. 
\oqed \end{proof}

Let $\mathbf{A}_{F, \pi}$ be the $\pi$-adic completion of $\mathcal{O}_{F}[\![ u_{\pi} ]\!][1/u_{\pi}]$ and $\mathbf{B}_{F, \pi}$ be the fraction field of $\mathbf{A}_{F, \pi}$. 
Then $\mathbf{A}_{F, \pi}$ is a complete discrete valuation ring with uniformizer $\pi$ and residue field $\mathbf{F}_{q}((\overline{u}_{\pi})) \subset \widetilde{E}^{+}$. 
Let $\mathbf{B}^{\prime}_{\pi}$ be the maximal unramified extention of $\mathbf{B}_{F, \pi}$ inside $\widetilde{\mathbf{A}}^{+}$ and let $\mathbf{B}$ be the its $\pi$-adic completion. 
Note that $\widehat{F}_{{\rm unr}} \subset \mathbf{B}$; therefore, $\mathbf{B}$ does not depend on the choice of the uniformizer $\pi$ by Proposition \ref{prop-coordinatetransformation}. 

By \cite[Lemma 1.4]{KR} and its proof, we have the next lemma. 

\begin{lem} \label{lem-FoN}
The residue field of $\mathcal{O}_{\mathbf{B}}$ is a separable closure of $\mathbf{F}_{q}((\overline{u}_{\pi}))$. There is a natural isomorphism
\[
{\rm Gal}(\mathbf{B}^{\prime}_{\pi} / \mathbf{B}_{F, \pi}) \simeq {\rm Gal}(\overline{F} / F_{\infty, \pi}) = H_{\pi}
\]
and this isomorphism satisfies that ${\rm Gal}(\overline{F} / F_{\infty, \pi})$ acts on the residue field of $\mathbf{B}^{\prime}_{\pi}$ as the subset of $\widetilde{\mathbf{E}}^{+}$. 
\end{lem}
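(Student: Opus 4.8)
The plan is to deduce the statement from the theory of the field of norms for the Lubin--Tate tower $F_{\infty,\pi}/F$, combined with the standard lifting of unramified extensions along the reduction $W_{F}(\widetilde{\mathbf{E}}) \to \widetilde{\mathbf{E}}$; this is exactly the content of \cite[Lemma 1.4]{KR} and its proof, which I recall here.

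Write $\mathbf{E}_{F} := \mathbf{F}_{q}((\overline{u}_{\pi}))$ for the residue field of $\mathbf{A}_{F,\pi}$, regarded inside $\widetilde{\mathbf{E}}$ via $\overline{u}_{\pi}$. First I would record two elementary facts: $\widetilde{\mathbf{E}}$ is separably closed and carries a continuous action of $G_{F}$, and $H_{\pi}$ acts trivially on $\mathbf{E}_{F}$. The latter holds because the $u_{\pi,n}$ are torsion points of $\mathcal{G}_{\pi}$, hence lie in $F_{\infty,\pi}$, so $H_{\pi} = \mathrm{Gal}(\overline{F}/F_{\infty,\pi})$ fixes $\overline{u}_{\pi}$, while it fixes $\mathbf{F}_{q}$ since the latter is embedded into $\widetilde{\mathbf{E}}^{+}$ through Teichm\"uller representatives lying in $\mathcal{O}_{F}/\pi$. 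The essential input, which I take from \cite{KR} and the field-of-norms theory it invokes, is that $\mathbf{E}_{F}$ is the norm field of $F_{\infty,\pi}/F$; consequently the separable closure $\mathbf{E}_{F}^{\mathrm{sep}}$ of $\mathbf{E}_{F}$ inside $\widetilde{\mathbf{E}}$ is a separable closure of $\mathbf{F}_{q}((\overline{u}_{\pi}))$, and there is a canonical isomorphism $\mathrm{Gal}(\mathbf{E}_{F}^{\mathrm{sep}}/\mathbf{E}_{F}) \simeq H_{\pi}$ compatible with the action of $H_{\pi}$ on $\widetilde{\mathbf{E}}$.

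Next I would lift this to mixed characteristic. The rings $\mathbf{A}_{F,\pi}$ and $W_{F}(\widetilde{\mathbf{E}})$ are $\pi$-adically complete, with residue rings $\mathbf{E}_{F}$ and $\widetilde{\mathbf{E}}$ respectively, and $\mathbf{A}_{F,\pi}$ is a complete discrete valuation ring with uniformizer $\pi$. The claim is that every finite separable subextension $\mathbf{E}_{F} \subset \mathbf{E}' \subset \widetilde{\mathbf{E}}$ lifts uniquely to a finite \'etale (equivalently, finite unramified) $\mathbf{A}_{F,\pi}$-subalgebra $\mathbf{A}'$ of $W_{F}(\widetilde{\mathbf{E}})$ with residue field $\mathbf{E}'$: for existence, lift the minimal polynomial of a primitive element of $\mathbf{E}'/\mathbf{E}_{F}$ to a monic polynomial over $\mathbf{A}_{F,\pi}$ and solve it by Hensel's lemma in $W_{F}(\widetilde{\mathbf{E}})$, starting from the Teichm\"uller lift of that element; uniqueness inside the ambient ring is the usual $\pi$-adic approximation argument for deformations of \'etale algebras. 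Taking the union over all such $\mathbf{E}'$ gives $\mathbf{B}'_{\pi} = \bigcup \mathbf{A}'[1/\pi]$, whose ring of integers $\bigcup \mathbf{A}'$ has residue field $\bigcup \mathbf{E}' = \mathbf{E}_{F}^{\mathrm{sep}}$; since $\pi$-adic completion does not change the residue field, the residue field of $\mathcal{O}_{\mathbf{B}}$ is $\mathbf{E}_{F}^{\mathrm{sep}}$, a separable closure of $\mathbf{F}_{q}((\overline{u}_{\pi}))$.

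Finally I would match the Galois groups. For a finite Galois subextension $\mathbf{E}'/\mathbf{E}_{F}$ the corresponding extension $\mathbf{A}'[1/\pi]/\mathbf{B}_{F,\pi}$ is finite, unramified and Galois, and reduction modulo $\pi$ induces an isomorphism $\mathrm{Gal}(\mathbf{A}'[1/\pi]/\mathbf{B}_{F,\pi}) \xrightarrow{\sim} \mathrm{Gal}(\mathbf{E}'/\mathbf{E}_{F})$; by the uniqueness above, this isomorphism intertwines the action of $\mathrm{Gal}(\mathbf{A}'[1/\pi]/\mathbf{B}_{F,\pi})$ on $\mathbf{A}'$ with its action on $\mathbf{E}' \subset \widetilde{\mathbf{E}}$. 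Passing to the inverse limit over $\mathbf{E}'$ yields a natural isomorphism $\mathrm{Gal}(\mathbf{B}'_{\pi}/\mathbf{B}_{F,\pi}) \xrightarrow{\sim} \mathrm{Gal}(\mathbf{E}_{F}^{\mathrm{sep}}/\mathbf{E}_{F})$, and composing it with the field-of-norms isomorphism of the second paragraph identifies $\mathrm{Gal}(\mathbf{B}'_{\pi}/\mathbf{B}_{F,\pi})$ with $H_{\pi} = \mathrm{Gal}(\overline{F}/F_{\infty,\pi})$ in such a way that $H_{\pi}$ acts on the residue field of $\mathbf{B}'_{\pi}$ through its natural action on $\widetilde{\mathbf{E}}$, extending the action on $\widetilde{\mathbf{E}}^{+}$. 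The only point that is genuinely substantial --- rather than bookkeeping --- is the field-of-norms input in the second paragraph; granting that, all of the above is the d\'evissage recalled here, for whose details I would simply refer to \cite[Lemma 1.4]{KR}.
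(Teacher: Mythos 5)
Your argument is correct and takes essentially the same route as the paper, which gives no independent proof of this lemma but simply invokes \cite[Lemma 1.4]{KR} and its proof --- that is, exactly the field-of-norms identification of $\mathbf{F}_{q}((\overline{u}_{\pi}))$ for the tower $F_{\infty,\pi}/F$ together with the Hensel/unramified-lifting d\'evissage inside $W_{F}(\widetilde{\mathbf{E}})$ that you spell out. Since your only substantive input is the same cited result, nothing further is needed.
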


From this lemma, we have $\mathbf{B}^{H_{\pi}} = \mathbf{B}_{F, \pi}$.

For $\rho > 0$, let 
\[
\rho' = \rho e \cdot \frac{p}{p-1} \cdot \frac{q-1}{q}. 
\]
Let $I$ be a subinterval of $(1 , +\infty )$, and let $f(Y) = \sum_{k \in \mathbf{Z}}a_{k}Y^{k}$ be a power series with $a_{k} \in F$ and such that ${\rm val}_{p}(a_{k}) + k/\rho' \rightarrow +\infty$ when $|k| \rightarrow +\infty$ for all $\rho \in I$.

The  series $f(u_{\pi})$ is converges in $\widetilde{\mathbf{B}}^{I}$ and we let $\mathbf{B}^{I}_{F, \pi}$ denote the set of $f(u_{\pi})$ where $f(Y)$ is as above. It is a subring of $\widetilde{\mathbf{B}}^{I}_{F, \pi} = (\widetilde{\mathbf{B}}^{I})^{H_{\pi}}$, which is stable under the action of $\Gamma_{\pi}$. The Frobenius map gives rise to a map $\varphi_{q} : \mathbf{B}^{I}_{F, \pi} \to \mathbf{B}^{qI}_{F, \pi}$. Let $\mathbf{B}^{\dag , r}_{{\rm rig},  F, \pi}$ denote the ring $\mathbf{B}^{[r , +\infty )}_{F, \pi}$. 
Let $\mathbf{B}^{\dag , r}_{F, \pi}$ denote the set of $f(u_{\pi}) \in \mathbf{B}^{\dag , r}_{{\rm rig}, F, \pi}$ such that in addition $\{a_{k}\}_{k \in \mathbf{Z}}$ is a bounded sequence.  
Let $\mathbf{B}^{\dag}_{F, \pi} = \bigcup_{r \gg 0} \mathbf{B}^{\dag , r}_{F, \pi}$. This is stable under the action of the Frobenius map $\varphi_{q}$.

Let $\RobbaB$ be the ring defined as above replacing $F$ by $\widehat{F}_{{\rm unr}}$. By Proposition \ref{prop-coordinatetransformation}, this does not depend on the choice of the uniformizer $\pi$ and we get the canonical inclusion $\Robba{\pi} \hookrightarrow \RobbaB$ for any uniformizer $\pi$ of $F$.

\section{$(\varphi, \Gamma)$-modules}

For $R = \mathbf{B}_{F, \pi}, \mathbf{B}_{F, \pi}^{\dag}$, a $(\varphi_{q}, \Gamma_{\pi})$-module over $R$ is a free $R$-module of finite rank $D$ with continuous semilinear actions of $\varphi_{q}$ and $\Gamma_{\pi}$ commuting with each other such that $\varphi_{q}$ sends a basis of $D$ to a basis of $D$. 
When $R = \mathbf{B}_{F, \pi}$, we say that $D$ is \textit{\'etale} if $D$ has a $\varphi_{q}$-stable $\mathbf{A}_{F, \pi}$-lattice $M$ such that the linear map $\varphi_{q}^{*}M \to M$ is an isomorphism. 
When $R = \mathbf{B}_{F, \pi}^{\dag}$, we say that $D$ is \textit{\'etale} if $\mathbf{B}_{F, \pi} \otimes_{\mathbf{B}_{F. \pi}^{\dag}} D$ is \'etale. Let ${\rm Mod}_{/R}^{\varphi_{q}, \Gamma_{\pi}, \acute{e}t}$ be the category of \'etale $(\varphi_{q}, \Gamma_{\pi})$-modules over $R$. 

Let ${\rm Rep}_{F} G_{F}$ be the category of finite-dimentional representations of $G_{F}$ over $F$. For any $V \in {\rm Rep}_{F}G_{F}$, put ${\rm D}_{\mathbf{B}_{F, \pi}}(V) = (\mathbf{B} \otimes_{F} V)^{H_{\pi}}$. For any $D_{\pi} \in {\rm Mod}_{/\mathbf{B}_{F, \pi}}^{\varphi_{q}, \Gamma_{\pi}, \acute{e}t}$, put ${\rm V}(D_{\pi}) = (\mathbf{B} \otimes_{\mathbf{B}_{F, \pi}} D)^{\varphi_{q} = 1}$. We have the following theorem (see \cite[Theorem 1.6]{KR}). 

\begin{thm} \label{thm-KR}
The functor $V \mapsto {\rm D}_{\mathbf{B}_{F, \pi}}(V)$ and $D_{\pi} \mapsto {\rm V}(D_{\pi})$ give rise to mutually inverse equivalence of categories between ${\rm Rep}_{F} G_{F}$ and ${\rm Mod}_{/\mathbf{B}_{F, \pi}}^{\varphi_{q}, \Gamma_{\pi}, \acute{e}t}$. 
\end{thm}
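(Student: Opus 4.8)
The plan is to follow the template of Fontaine's proof of the cyclotomic equivalence \cite{Fo}, transported to the Lubin--Tate situation through the ring $\mathbf{B}$ and its residue field, Lemma~\ref{lem-FoN} being exactly what makes the transport go through: it supplies both the isomorphism ${\rm Gal}(\mathbf{B}^{\prime}_{\pi}/\mathbf{B}_{F,\pi})\simeq H_\pi$ and the fact that the residue field $\overline{\mathbf{E}}$ of $\mathcal{O}_{\mathbf{B}}$ is a separable closure of $\mathbf{E}_{F,\pi}:=\mathbf{F}_{q}((\overline{u}_{\pi}))$. Write $\mathbf{A}:=\mathcal{O}_{\mathbf{B}}$; it is $\pi$-adically complete, satisfies $\mathbf{A}^{H_\pi}=\mathbf{A}_{F,\pi}$, and carries an action of $\varphi_q$ lifting the $q$-power Frobenius of $\overline{\mathbf{E}}$ and commuting with $H_\pi$. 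First I would reduce to an integral statement: for a finite free $\mathcal{O}_F$-module $T$ with continuous $G_F$-action put ${\rm D}(T)=(\mathbf{A}\otimes_{\mathcal{O}_F}T)^{H_\pi}$, and for an \'etale $\varphi_q$-module $M$ over $\mathbf{A}_{F,\pi}$ put ${\rm T}(M)=(\mathbf{A}\otimes_{\mathbf{A}_{F,\pi}}M)^{\varphi_q=1}$ (the integral analogues of the two functors in the statement). Choosing a $G_F$-stable $\mathcal{O}_F$-lattice in $V$, resp.\ a $\varphi_q$-stable $\mathbf{A}_{F,\pi}$-lattice in $D_\pi$ (which can be taken $\Gamma_\pi$-stable by continuity), reduces the theorem to showing that ${\rm D}$ and ${\rm T}$ are mutually quasi-inverse equivalences compatibly with $\varphi_q$ and $\Gamma_\pi$; and this in turn may be carried out modulo $\pi^n$ and then passed to the limit.

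The argument then splits into descent along $H_\pi$ and descent along $\varphi_q$. \textbf{Descent along $H_\pi$.} One shows that $(\mathbf{A}\otimes_{\mathcal{O}_F}T)^{H_\pi}$ is finite free over $\mathbf{A}_{F,\pi}$ of the expected rank and that the natural map $\mathbf{A}\otimes_{\mathbf{A}_{F,\pi}}{\rm D}(T)\to\mathbf{A}\otimes_{\mathcal{O}_F}T$ is an isomorphism. By d\'evissage in the $\pi$-adic length of $T$ this reduces to $\overline{\mathbf{E}}$-semilinear representations of $H_\pi$, where it is precisely $H^1_{{\rm cts}}(H_\pi,{\rm GL}_d(\overline{\mathbf{E}}))=1$ together with $(\overline{\mathbf{E}})^{H_\pi}=\mathbf{E}_{F,\pi}$, i.e.\ Hilbert~90 for the profinite group ${\rm Gal}(\overline{\mathbf{E}}/\mathbf{E}_{F,\pi})\simeq H_\pi$ of Lemma~\ref{lem-FoN}. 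Since $\varphi_q$ commutes with $H_\pi$ and is an isomorphism after $\otimes_{\mathbf{A}_{F,\pi}}\mathbf{A}$, it makes ${\rm D}(T)$ an \'etale $\varphi_q$-module, and $\Gamma_\pi=G_F/H_\pi$ acts on it continuously and semilinearly. \textbf{Descent along $\varphi_q$.} Writing $M$ by a matrix $A\in{\rm GL}_d(\mathbf{A}_{F,\pi})$, one needs ${\rm T}(M)=\{x\in\mathbf{A}\otimes_{\mathbf{A}_{F,\pi}}M:\varphi_q(x)=Ax\}$ to be free of rank $d$ over $\mathcal{O}_F$ with $\mathbf{A}\otimes_{\mathcal{O}_F}{\rm T}(M)\to\mathbf{A}\otimes_{\mathbf{A}_{F,\pi}}M$ an isomorphism. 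By $\pi$-adic successive approximation this comes down to solving $\varphi_q(C)=AC$ for some $C\in{\rm GL}_d(\overline{\mathbf{E}})$, which is possible because the map $g\mapsto\varphi_q(g)g^{-1}$ is surjective on ${\rm GL}_d(\overline{\mathbf{E}})$ --- a Lang-type fact valid since $\overline{\mathbf{E}}$ is separably closed, $\varphi_q$ is there the $q$-power Frobenius, and $\varphi_q$ acts trivially on $\mathbf{F}_q^d$ --- so the solutions constitute a $\varphi_q$-trivializing $\mathcal{O}_F$-lattice; the residual $G_F$-action on ${\rm T}(M)$ is induced through $H_\pi\subset G_F$ and is continuous.

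Finally, both descent isomorphisms are $\varphi_q$-, $G_F$- and $\Gamma_\pi$-equivariant by construction, so taking $\varphi_q$-invariants of $\mathbf{A}\otimes_{\mathcal{O}_F}{\rm T}(M)\cong\mathbf{A}\otimes_{\mathbf{A}_{F,\pi}}M$ and $H_\pi$-invariants of $\mathbf{A}\otimes_{\mathbf{A}_{F,\pi}}{\rm D}(T)\cong\mathbf{A}\otimes_{\mathcal{O}_F}T$ produces functorial isomorphisms ${\rm D}({\rm T}(M))\cong M$ and ${\rm T}({\rm D}(T))\cong T$; inverting $\pi$ yields the claimed equivalence between ${\rm Rep}_{F}G_{F}$ and ${\rm Mod}_{/\mathbf{B}_{F,\pi}}^{\varphi_{q},\Gamma_{\pi},\acute{e}t}$, with ${\rm D}_{\mathbf{B}_{F,\pi}}$ and ${\rm V}$ as quasi-inverses. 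The step I expect to be the real work is establishing the two descent \emph{isomorphisms} integrally (not merely the dimension counts for the invariants): this is where the $\pi$-adic completeness of $\mathbf{A}$, the residue-field identification of Lemma~\ref{lem-FoN}, and the two Hilbert~90--type inputs (for $H_\pi$ and for $\varphi_q$) have to be dovetailed by d\'evissage, everything else being formal. (This is the content of \cite[Theorem~1.6]{KR}.)
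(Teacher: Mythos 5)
The paper gives no proof of this theorem: it is quoted directly from \cite[Theorem 1.6]{KR}, whose proof is exactly the Fontaine-style argument you outline (reduction to $\mathcal{O}_F$-lattices and d\'evissage mod $\pi^n$, Hilbert~90 for $H_\pi$ acting on the separably closed residue field supplied by Lemma~\ref{lem-FoN}, and Artin--Schreier/Lang-type triviality of $\varphi_q$-descent over that field, carried out over the ramified Witt vectors so that the invariants are $\mathcal{O}_F$-modules). Your reconstruction is correct in outline and takes essentially the same route as the cited source, so there is nothing to add beyond the details already delegated to \cite{KR}.
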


We say that $D_{\pi} \in {\rm Mod}_{\mathbf{B}_{F, \pi}}^{\varphi_{q}, \Gamma_{\pi}, \acute{e}t}$ is $\pi$-overconvergent if there exists a basis of $D_{\pi}$ in which the matrices of $\varphi_{q}$ and of all $\gamma \in \Gamma_{\pi}$ have entries in $\mathbf{B}_{F, \pi}^{\dag}$. This basis generates an object $D_{\pi}^{\dag}$ of ${\rm Mod}_{/\mathbf{B}_{F, \pi}^{\dag}}^{\varphi_{q}, \Gamma_{\pi}, \acute{e}t}$. 
We have next proposition from \cite[Proposition 1.5]{FX}.

\begin{prop} \label{prop-overconvergentcatequivalence}
\begin{enumerate}
\item If $D_{\pi}^{\dag}$ is an \'etale $(\varphi_{q}, \Gamma_{\pi})$-module over $\mathbf{B}_{F, \pi}^{\dag}$, then
\[
{\rm V}(\mathbf{B}_{F, \pi} \otimes_{\mathbf{B}_{F, \pi}^{\dag}} D_{\pi}^{\dag}) = (\mathbf{B}^{\dag} \otimes_{\mathbf{B}_{F, \pi}^{\dag}} D_{F, \pi}^{\dag})^{\varphi_{q} = 1}.
\]
\item The functor $V \mapsto (\mathbf{B}^{\dag} \otimes_{F} V)^{H_{\pi}}$ and $D_{\pi}^{\dag} \mapsto {\rm V}(\mathbf{B}_{F, \pi} \otimes_{\mathbf{B}_{F, \pi}^{\dag}} D_{\pi}^{\dag})$ are mutually quasi-inverse equivalences of the category of overconvergent $F$-representation of $G_{F}$ and ${\rm Mod}_{/\mathbf{B}_{F, \pi}^{\dag}}^{\varphi_{q}, \Gamma_{\pi}, \acute{e}t}$. 
\end{enumerate}
\end{prop}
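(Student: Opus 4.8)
The plan is to isolate the one genuinely analytic input and derive everything else formally. For part~(1) the inclusion $(\mathbf{B}^{\dag}\otimes_{\mathbf{B}_{F,\pi}^{\dag}}D_{\pi}^{\dag})^{\varphi_{q}=1}\subseteq{\rm V}(\mathbf{B}_{F,\pi}\otimes_{\mathbf{B}_{F,\pi}^{\dag}}D_{\pi}^{\dag})$ is immediate, because the right-hand side is by definition $(\mathbf{B}\otimes_{\mathbf{B}_{F,\pi}^{\dag}}D_{\pi}^{\dag})^{\varphi_{q}=1}$ and $\mathbf{B}^{\dag}\subseteq\mathbf{B}$. Thus the entire content of~(1) is the statement that every $\varphi_{q}$-fixed vector of $\mathbf{B}\otimes_{\mathbf{B}_{F,\pi}^{\dag}}D_{\pi}^{\dag}$ already lies in $\mathbf{B}^{\dag}\otimes_{\mathbf{B}_{F,\pi}^{\dag}}D_{\pi}^{\dag}$; granting it, the elements of $V:={\rm V}(\mathbf{B}_{F,\pi}\otimes_{\mathbf{B}_{F,\pi}^{\dag}}D_{\pi}^{\dag})$ are simultaneously $\varphi_{q}$-fixed and overconvergent, which gives the reverse inclusion and hence the equality~(1). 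Concretely, fix a basis of $D_{\pi}^{\dag}$ in which $\varphi_{q}$ acts by a matrix $P$ with $P$ and $P^{-1}$ integral and overconvergent (an integral \'etale overconvergent basis, whose existence uses the \'etaleness hypothesis); after rescaling by a power of $\pi$, a $\varphi_{q}$-fixed vector has integral coordinates $\vec{x}$ solving the $\varphi_{q}$-difference equation $\vec{x}=P\,\varphi_{q}(\vec{x})$, and one must show that $\vec{x}$ has overconvergent coordinates.

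I would prove this by reducing modulo $\pi$ to characteristic $p$ and then lifting. Modulo $\pi$, $\overline{\vec{x}}$ solves $\overline{\vec{x}}=\overline{P}\,\varphi_{q}(\overline{\vec{x}})$ over the residue field $\mathbf{E}$ of $\mathcal{O}_{\mathbf{B}}$ --- a separable closure of $\mathbf{F}_{q}((\overline{u}_{\pi}))$ by Lemma~\ref{lem-FoN} --- with $\overline{P}$ overconvergent. The crucial estimate, which is the Lubin--Tate analogue of the overconvergence arguments of Cherbonnier--Colmez (cf.\ \cite{CC}), is that any solution of such an equation over $\mathbf{E}$ with overconvergent coefficient matrix has coordinates in the overconvergent subring $\mathbf{E}^{\dag}$; it is proved by a successive-approximation argument exploiting the interaction of $\varphi_{q}$ (or its left inverse) with the convergence intervals $I\rightsquigarrow qI$ of \S 2. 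With the characteristic-$p$ case settled, one lifts $\pi$-adically: each step of the d\'evissage requires clearing an obstruction by $\varphi_{q}-1$ on an overconvergent module, which the same estimate (applied to $\varphi_{q}-1$) accomplishes with a preimage of controlled size, so the successive corrections sum to an overconvergent vector. Inverting $\pi$ yields~(1).

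Part~(2) then follows formally. If $V$ is $\pi$-overconvergent with associated $D_{\pi}^{\dag}\subseteq{\rm D}_{\mathbf{B}_{F,\pi}}(V)$, then $\mathbf{B}_{F,\pi}\otimes_{\mathbf{B}_{F,\pi}^{\dag}}D_{\pi}^{\dag}={\rm D}_{\mathbf{B}_{F,\pi}}(V)$, whence $\mathbf{B}\otimes_{\mathbf{B}_{F,\pi}^{\dag}}D_{\pi}^{\dag}=\mathbf{B}\otimes_{F}V$ by the comparison isomorphism of Theorem~\ref{thm-KR}; part~(1) gives $V\subseteq\mathbf{B}^{\dag}\otimes_{\mathbf{B}_{F,\pi}^{\dag}}D_{\pi}^{\dag}$, so the natural injection $\mathbf{B}^{\dag}\otimes_{F}V\hookrightarrow\mathbf{B}^{\dag}\otimes_{\mathbf{B}_{F,\pi}^{\dag}}D_{\pi}^{\dag}$ becomes an isomorphism after applying $\mathbf{B}\otimes_{\mathbf{B}^{\dag}}-$, hence is an isomorphism since $\mathbf{B}$ is faithfully flat over the field $\mathbf{B}^{\dag}$; taking $H_{\pi}$-invariants and using $(\mathbf{B}^{\dag})^{H_{\pi}}=\mathbf{B}_{F,\pi}^{\dag}$ gives $(\mathbf{B}^{\dag}\otimes_{F}V)^{H_{\pi}}=D_{\pi}^{\dag}$, a free \'etale $(\varphi_{q},\Gamma_{\pi})$-module over $\mathbf{B}_{F,\pi}^{\dag}$. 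Conversely, for \'etale $D_{\pi}^{\dag}$ over $\mathbf{B}_{F,\pi}^{\dag}$, the representation $V:={\rm V}(\mathbf{B}_{F,\pi}\otimes_{\mathbf{B}_{F,\pi}^{\dag}}D_{\pi}^{\dag})$ has ${\rm D}_{\mathbf{B}_{F,\pi}}(V)=\mathbf{B}_{F,\pi}\otimes_{\mathbf{B}_{F,\pi}^{\dag}}D_{\pi}^{\dag}$ by Theorem~\ref{thm-KR}, which admits a basis over $\mathbf{B}_{F,\pi}^{\dag}$ with overconvergent matrices for $\varphi_{q}$ and for all $\gamma\in\Gamma_{\pi}$, so $V$ is $\pi$-overconvergent. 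The two composites are then the identity: $V\mapsto(\mathbf{B}^{\dag}\otimes_{F}V)^{H_{\pi}}=D_{\pi}^{\dag}\mapsto{\rm V}(\mathbf{B}_{F,\pi}\otimes_{\mathbf{B}_{F,\pi}^{\dag}}D_{\pi}^{\dag})={\rm V}({\rm D}_{\mathbf{B}_{F,\pi}}(V))=V$, and $D_{\pi}^{\dag}\mapsto V\mapsto(\mathbf{B}^{\dag}\otimes_{F}V)^{H_{\pi}}=D_{\pi}^{\dag}$, using Theorem~\ref{thm-KR} and the identification just established; naturality is clear.

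The step I expect to be the main obstacle is the characteristic-$p$ overconvergence estimate in the second paragraph together with its $\pi$-adic lift: controlling the convergence radii under $\varphi_{q}$ and $\varphi_{q}-1$ and ensuring that the successive approximations stay inside the overconvergent ring is the genuinely technical point, whereas everything else reduces to Galois descent along $H_{\pi}$ and the equivalence of Theorem~\ref{thm-KR}. (This is essentially \cite[Proposition 1.5]{FX}.)
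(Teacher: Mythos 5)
First, a point of comparison: the paper does not prove this proposition at all --- it is imported verbatim from \cite[Proposition 1.5]{FX} --- so your attempt to reprove it is being measured against the literature rather than against an argument in the text. Your formal scaffolding is fine and matches the standard route: the easy inclusion in (1), and the deduction of (2) from (1) by faithfully flat base change $\mathbf{B}/\mathbf{B}^{\dag}$, $H_{\pi}$-descent using $(\mathbf{B}^{\dag})^{H_{\pi}} = \mathbf{B}_{F, \pi}^{\dag}$, and Theorem \ref{thm-KR}, is essentially how Fourquaux--Xie argue.

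The genuine gap is in the core of (1). Your ``crucial estimate'' is stated in characteristic $p$: that a solution of $\overline{\vec{x}} = \overline{P}\,\varphi_{q}(\overline{\vec{x}})$ over the residue field has coordinates in ``the overconvergent subring $\mathbf{E}^{\dag}$.'' But overconvergence is invisible modulo $\pi$: the reduction of the integral overconvergent ring is the \emph{whole} residue field (every element of $\widetilde{\mathbf{E}}$ has a Teichm\"uller lift, which is overconvergent), so there is no nontrivial characteristic-$p$ statement of this kind to prove, and consequently nothing for your d\'evissage to quote at each step. The actual content of \cite[Proposition 1.5(1)]{FX} --- and of the Cherbonnier--Colmez-type arguments you invoke --- is a uniform valuation estimate on the higher $\pi$-adic (Teichm\"uller) digits of the solution $\vec{x}$ of $\vec{x} = P\varphi_{q}(\vec{x})$: one must show $\mathrm{val}_{\mathbf{E}}(x_{k}) \geq -ck$ for a constant $c$ depending only on $P$, i.e.\ precisely the part you defer with ``a preimage of controlled size, so the successive corrections sum to an overconvergent vector.'' As written, the induction has no base-case estimate and no mechanism forcing the radii not to degenerate, so the proof of (1) is not yet a proof. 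A secondary, fillable gap: the existence of a basis of $D_{\pi}^{\dag}$ with $P, P^{-1}$ integral and overconvergent is not immediate from the paper's definition of \'etaleness (which is imposed only after base change to $\mathbf{B}_{F, \pi}$); it is true, but it is a statement about lattices over $\mathbf{A}_{F, \pi}^{\dag}$ that itself requires justification. If you do not want to redo this analysis, the honest move is to do what the paper does and cite \cite[Proposition 1.5]{FX}.
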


\begin{lem}
Let $\mathbf{B}^{\dag} = \widetilde{\mathbf{B}}^{\dag} \cap \mathbf{B}$. If $D_{\pi}$ is $\pi$-overconvergent, then $D_{\pi}^{\dag} = (\mathbf{B}^{\dag} \otimes_{F} {\rm V}(D_{\pi}))^{H_{\pi}}$ and $D_{\pi}^{\dag}$ is uniquely determined. 
\end{lem}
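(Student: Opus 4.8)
The plan is to prove the two assertions in order: first the identity $D_{\pi}^{\dag} = (\mathbf{B}^{\dag} \otimes_{F} \mathrm{V}(D_{\pi}))^{H_{\pi}}$, and then the uniqueness of $D_{\pi}^{\dag}$ as an object generating $D_{\pi}$. For the first, I would start from the hypothesis that $D_{\pi}$ is $\pi$-overconvergent, so that by definition there is a basis of $D_{\pi}$ in which $\varphi_{q}$ and all $\gamma \in \Gamma_{\pi}$ have entries in $\mathbf{B}_{F,\pi}^{\dag}$; this basis generates $D_{\pi}^{\dag} \in \catMod{\pi}$ with $\mathbf{B}_{F,\pi} \otimes_{\mathbf{B}_{F,\pi}^{\dag}} D_{\pi}^{\dag} = D_{\pi}$. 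Applying Proposition \ref{prop-overconvergentcatequivalence}(2) to $D_{\pi}^{\dag}$, the corresponding overconvergent representation is $V := \mathrm{V}(\mathbf{B}_{F,\pi} \otimes_{\mathbf{B}_{F,\pi}^{\dag}} D_{\pi}^{\dag}) = \mathrm{V}(D_{\pi})$, and the quasi-inverse sends $V$ back to $(\mathbf{B}^{\dag} \otimes_{F} V)^{H_{\pi}}$, which must therefore be isomorphic to $D_{\pi}^{\dag}$. To upgrade this isomorphism to an equality, I would use that both sides sit inside $\mathbf{B}^{\dag} \otimes_{F} V$: the natural map $\mathbf{B}^{\dag} \otimes_{\mathbf{B}_{F,\pi}^{\dag}} D_{\pi}^{\dag} \to \mathbf{B}^{\dag} \otimes_{F} V$ coming from $D_{\pi}^{\dag} \subset \mathbf{B} \otimes_{\mathbf{B}_{F,\pi}} D_{\pi}$ and the inclusion $\mathrm{V}(D_{\pi}) \subset \mathbf{B} \otimes_{\mathbf{B}_{F,\pi}} D_{\pi}$; taking $H_{\pi}$-invariants and comparing with Proposition \ref{prop-overconvergentcatequivalence}(1), which identifies $\mathrm{V}(D_{\pi})$ as $(\mathbf{B}^{\dag} \otimes_{\mathbf{B}_{F,\pi}^{\dag}} D_{\pi}^{\dag})^{\varphi_{q}=1}$, should force the two $\mathbf{B}_{F,\pi}^{\dag}$-lattices to coincide.

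For the uniqueness, suppose $D_{\pi}'^{\dag}$ is another object of $\catMod{\pi}$ with $\mathbf{B}_{F,\pi} \otimes_{\mathbf{B}_{F,\pi}^{\dag}} D_{\pi}'^{\dag} = D_{\pi}$. By Proposition \ref{prop-overconvergentcatequivalence}(2) the associated representation is again $\mathrm{V}(D_{\pi}) = V$, and the quasi-inverse applied to $V$ gives $(\mathbf{B}^{\dag} \otimes_{F} V)^{H_{\pi}}$, which by the first part equals $D_{\pi}^{\dag}$ (and also $D_{\pi}'^{\dag}$ by the same argument). Hence $D_{\pi}'^{\dag} = D_{\pi}^{\dag}$ inside $\mathbf{B} \otimes_{\mathbf{B}_{F,\pi}} D_{\pi}$. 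Here it is important that the definition of $\mathbf{B}^{\dag} = \widetilde{\mathbf{B}}^{\dag} \cap \mathbf{B}$ makes $(\mathbf{B}^{\dag})^{H_{\pi}} = \mathbf{B}_{F,\pi}^{\dag}$, which I would record (if not already implicit from $\mathbf{B}^{H_{\pi}} = \mathbf{B}_{F,\pi}$ of Lemma \ref{lem-FoN} and the definition of $\mathbf{B}_{F,\pi}^{\dag}$ as the overconvergent subring) so that the functors of Proposition \ref{prop-overconvergentcatequivalence}(2) genuinely land in modules over $\mathbf{B}_{F,\pi}^{\dag}$.

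The main obstacle I anticipate is the passage from ``isomorphic'' to ``equal'': Proposition \ref{prop-overconvergentcatequivalence} is stated as an equivalence of categories, so a priori it only yields $D_{\pi}^{\dag} \cong (\mathbf{B}^{\dag} \otimes_{F} \mathrm{V}(D_{\pi}))^{H_{\pi}}$ rather than an identity of submodules of $\mathbf{B} \otimes_{\mathbf{B}_{F,\pi}} D_{\pi}$. To close this gap I would track the natural transformations realizing the equivalence and check they are compatible with the canonical inclusions into the ambient $\mathbf{B}$-module; concretely, the composite $D_{\pi}^{\dag} \hookrightarrow \mathbf{B} \otimes_{\mathbf{B}_{F,\pi}} D_{\pi} \supset \mathrm{V}(D_{\pi})$ together with $\mathbf{B}^{\dag}$-linearity and $H_{\pi}$-invariance should pin down $D_{\pi}^{\dag} = (\mathbf{B}^{\dag} \otimes_{F} \mathrm{V}(D_{\pi}))^{H_{\pi}}$ on the nose. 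A secondary technical point is faithful flatness of $\mathbf{B}_{F,\pi}$ over $\mathbf{B}_{F,\pi}^{\dag}$ (or at least that $\mathbf{B}^{\dag} \otimes_{\mathbf{B}_{F,\pi}^{\dag}} D_{\pi}^{\dag} \to \mathbf{B} \otimes_{\mathbf{B}_{F,\pi}} D_{\pi}$ is injective), which underlies treating $D_{\pi}^{\dag}$ as a genuine submodule; this is standard in the theory but should be cited or briefly justified.
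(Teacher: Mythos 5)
Your proposal is correct and follows essentially the same route as the paper: realize ${\rm V}(D_{\pi})$ inside $\mathbf{B}^{\dag} \otimes_{\mathbf{B}_{F, \pi}^{\dag}} D_{\pi}^{\dag}$ via Proposition \ref{prop-overconvergentcatequivalence}(1), deduce $\mathbf{B}^{\dag} \otimes_{F} {\rm V}(D_{\pi}) = \mathbf{B}^{\dag} \otimes_{\mathbf{B}_{F, \pi}^{\dag}} D_{\pi}^{\dag}$, and then take $H_{\pi}$-invariants using $(\mathbf{B}^{\dag})^{H_{\pi}} = \mathbf{B}_{F, \pi}^{\dag}$ and the trivial $H_{\pi}$-action on $D_{\pi}^{\dag}$, with uniqueness following since the right-hand side depends only on $D_{\pi}$. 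The only notable difference is that the paper closes the ``isomorphic versus equal'' gap you flag not by tracking the natural transformations of the equivalence but by the dimension count ${\rm dim}_{F}\,{\rm V}(D_{\pi}) = {\rm dim}_{\mathbf{B}_{F, \pi}^{\dag}} D_{\pi}^{\dag}$ supplied by Theorem \ref{thm-KR}, which combined with Proposition \ref{prop-overconvergentcatequivalence}(1) gives the equality of the two $\mathbf{B}^{\dag}$-spans directly inside the ambient module.
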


\begin{proof}
By the $\pi$-overconvergency and Proposition \ref{prop-overconvergentcatequivalence}, ${\rm V}(D_{\pi}) = (\mathbf{B}^{\dag} \otimes_{\mathbf{B}_{F, \pi}^{\dag}} D_{\pi}^{\dag})^{\varphi_{q} = 1}$. By the $\pi$-overconvergency and Theorem \ref{thm-KR}, we have ${\rm dim}_{F} {\rm V}(D_{\pi}) = {\rm dim}_{\mathbf{B}_{F, \pi}^{\dag}} D_{\pi}^{\dag}$. Hence, 
\[
\mathbf{B}^{\dag} \otimes_{F} {\rm V}(D_{\pi}) = \mathbf{B}^{\dag} \otimes_{F} (\mathbf{B}^{\dag} \otimes_{\mathbf{B}_{F, \pi}^{\dag}} D_{\pi}^{\dag})^{\varphi_{q} = 1} = \mathbf{B}^{\dag} \otimes_{\mathbf{B}_{F, \pi}^{\dag}} D_{\pi}^{\dag}.
\]
We have $(\mathbf{B}^{\dag})^{H_{\pi}} = \mathbf{B}_{F, \pi}^{\dag}$ and $H_{\pi}$ acts trivially on $D_{\pi}^{\dag}$.  Consequently, 
\[
(\mathbf{B}^{\dag} \otimes_{F} {\rm V}(D_{\pi}))^{H_{\pi}} = (\mathbf{B}^{\dag} \otimes_{\mathbf{B}_{F, \pi}^{\dag}} D_{\pi}^{\dag})^{H_{\pi}} = D_{\pi}^{\dag}. 
\]
\oqed\end{proof}

We say that $V \in {\rm Rep}_{F}G_{F}$ is $\pi$-overconvergent if ${\rm D}_{\mathbf{B}_{F, \pi}}(V) \in \catMod{\pi}$ is $\pi$-overconvergent. For different uniformizers $\pi$ and $\pi'$ of $F$, we want to compare the $\pi$-overconvergence and $\pi'$-overconvergence of $V$. Let $\mathbf{M}_{\pi, \pi'}$ be the functor from $\catMod{\pi}$ to $\catMod{\pi'}$ such that $\mathbf{M}_{\pi, \pi'}(D_{\pi}^{\dag}) = (\mathbf{B}^{\dag} \otimes_{\mathbf{B}_{F, \pi}^{\dag}} D_{\pi}^{\dag})^{H_{\pi'}}$. First, we have the following proposition.

\begin{prop} \label{compatibilityofM}
We assume that $V$ is $\pi$-overconvergent and $\pi'$-overconvergent. Let $D_{\pi}^{\dag}$ (\textit{resp.} $D_{\pi'}^{\dag}$) be the corresponding object of ${\rm Mod}_{/\mathbf{B}_{F, \pi}^{\dag}}^{\varphi_{q}, \Gamma_{\pi}, \acute{e}t}$ (\textit{resp.} $\catMod{\pi'}$). Then $\mathbf{M}_{\pi, \pi'}(D_{\pi}^{\dag}) = D_{\pi'}^{\dag}$.
\end{prop}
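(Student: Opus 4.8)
The plan is to pass through the functor $V$ (and its overconvergent analogue) and use the already-established compatibilities. Since $V$ is assumed $\pi$-overconvergent, Proposition \ref{prop-overconvergentcatequivalence}(1) gives $\mathrm{V}(\mathbf{B}_{F, \pi} \otimes_{\mathbf{B}_{F, \pi}^{\dag}} D_{\pi}^{\dag}) = (\mathbf{B}^{\dag} \otimes_{\mathbf{B}_{F, \pi}^{\dag}} D_{\pi}^{\dag})^{\varphi_{q} = 1}$, and by Theorem \ref{thm-KR} together with the definition of $\pi$-overconvergence this common space is canonically $V$ itself. The key identity I would extract first is therefore
\[
\mathbf{B}^{\dag} \otimes_{\mathbf{B}_{F, \pi}^{\dag}} D_{\pi}^{\dag} \;=\; \mathbf{B}^{\dag} \otimes_{F} V,
\]
which follows exactly as in the preceding Lemma: tensor the equality $V = (\mathbf{B}^{\dag} \otimes_{\mathbf{B}_{F, \pi}^{\dag}} D_{\pi}^{\dag})^{\varphi_{q}=1}$ up to $\mathbf{B}^{\dag}$ and use the dimension count $\dim_F V = \dim_{\mathbf{B}_{F,\pi}^{\dag}} D_\pi^{\dag}$ to see the natural inclusion is an isomorphism. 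Note that this identification is $H_{\pi'}$-equivariant, because $H_{\pi'}$ acts on the left-hand side only through $\mathbf{B}^{\dag}$ (as $D_{\pi}^{\dag}$ carries only a $\Gamma_{\pi}$-action and $H_\pi$ acts trivially, while $H_{\pi'} \subset G_F$ acts on $\mathbf{B}^{\dag}$), and on the right-hand side through both $\mathbf{B}^{\dag}$ and $V$, and the isomorphism is built from $G_F$-equivariant data.

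Taking $H_{\pi'}$-invariants of the displayed identity then gives
\[
\mathbf{M}_{\pi, \pi'}(D_{\pi}^{\dag}) \;=\; (\mathbf{B}^{\dag} \otimes_{\mathbf{B}_{F, \pi}^{\dag}} D_{\pi}^{\dag})^{H_{\pi'}} \;=\; (\mathbf{B}^{\dag} \otimes_{F} V)^{H_{\pi'}}.
\]
Now I invoke Proposition \ref{prop-overconvergentcatequivalence}(2): since $V$ is also $\pi'$-overconvergent, the functor $V \mapsto (\mathbf{B}^{\dag} \otimes_{F} V)^{H_{\pi'}}$ is precisely the functor producing the overconvergent $(\varphi_q, \Gamma_{\pi'})$-module attached to $V$, i.e. $(\mathbf{B}^{\dag} \otimes_{F} V)^{H_{\pi'}} = D_{\pi'}^{\dag}$ by the uniqueness statement in the preceding Lemma (applied to $\pi'$). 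Chaining the two equalities yields $\mathbf{M}_{\pi, \pi'}(D_{\pi}^{\dag}) = D_{\pi'}^{\dag}$. One should also remark that $\varphi_q$ and $\Gamma_{\pi'}$ act on $(\mathbf{B}^{\dag} \otimes_F V)^{H_{\pi'}}$ via their action on the first factor $\mathbf{B}^{\dag}$, so the identification respects the $(\varphi_q, \Gamma_{\pi'})$-module structure, not merely the underlying module; this is what makes the equality take place in $\catMod{\pi'}$.

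The only genuinely delicate point is the $H_{\pi'}$-equivariance of the isomorphism $\mathbf{B}^{\dag} \otimes_{\mathbf{B}_{F, \pi}^{\dag}} D_{\pi}^{\dag} \simeq \mathbf{B}^{\dag} \otimes_F V$: one must check that the map is not just an abstract $\mathbf{B}^{\dag}$-linear isomorphism of the right size but is induced by the canonical $G_F$-equivariant comparison, so that passing to $H_{\pi'}$-invariants is legitimate. This is handled by noting that the isomorphism is obtained by $\mathbf{B}^{\dag}$-linearly extending the inclusion $V = (\mathbf{B}^{\dag}\otimes_{\mathbf{B}_{F,\pi}^\dag} D_\pi^\dag)^{\varphi_q=1} \hookrightarrow \mathbf{B}^{\dag}\otimes_{\mathbf{B}_{F,\pi}^\dag} D_\pi^\dag$, and both the inclusion and the $\mathbf{B}^{\dag}$-scalars are acted on by all of $G_F$ compatibly. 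Everything else is the same bookkeeping with invariants and base change already carried out in the Lemma just above, so the argument is short.
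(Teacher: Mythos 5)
Your argument is correct and is essentially the paper's own proof: both identify $\mathbf{B}^{\dag} \otimes_{\mathbf{B}_{F, \pi}^{\dag}} D_{\pi}^{\dag}$ with $\mathbf{B}^{\dag} \otimes_{F} V$ via Proposition \ref{prop-overconvergentcatequivalence}(1) (as in the preceding Lemma) and then take $H_{\pi'}$-invariants, using $\pi'$-overconvergence and Proposition \ref{prop-overconvergentcatequivalence}(2) to recognize $(\mathbf{B}^{\dag} \otimes_{F} V)^{H_{\pi'}} = D_{\pi'}^{\dag}$. Your added remarks on the $H_{\pi'}$- and $(\varphi_{q}, \Gamma_{\pi'})$-equivariance of the comparison isomorphism are a welcome refinement of the same argument, not a different route.
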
 

\begin{proof}
$V = (\mathbf{B}^{\dag} \otimes_{\mathbf{B}_{F, \pi}^{\dag}} D_{\pi}^{\dag})^{\varphi_{q} = 1}$ by Proposition \ref{prop-overconvergentcatequivalence}, Hence, 
\begin{align}
D_{\pi'}^{\dag} &= (\mathbf{B}^{\dag} \otimes_{F} V)^{H_{\pi'}} = \left( \mathbf{B}^{\dag} \otimes_{F} (\mathbf{B}^{\dag} \otimes_{\mathbf{B}_{F, \pi}^{\dag}} D_{\pi}^{\dag})^{\varphi_{q} = 1} \right)^{H_{\pi'}} \notag \\
&= (\mathbf{B}^{\dag} \otimes_{\mathbf{B}_{F, \pi}^{\dag}} D_{\pi}^{\dag})^{H_{\pi'}} =  \mathbf{M}_{\pi, \pi'}(D_{\pi}^{\dag}). \notag
\end{align}
\oqed \end{proof}

If we can show that $\mathbf{M}_{\pi, \pi'}$ and $\mathbf{M}_{\pi', \pi}$ are mutually quasi-inverse equivalences of $\catMod{\pi}$ and $\catMod{\pi'}$, we have the equivalence of $\pi$-overconvergency and $\pi'$-overconvergency of $V \in {\rm Rep}_{F}G_{F}$. 
The next aim is to describe $\mathbf{M}_{\pi, \pi'}(D_{\pi}^{\dag})$ using $\mathbf{B}_{\widehat{F}_{{\rm unr}}}^{\dag}$. 
Let $F_{{\rm unr}, \infty} = F_{{\rm unr}} \cdot F_{\infty, \pi}$, which does not depend on the choice of $\pi$ because it is the maximal abelian extension of F from local class field theory. 
Note that $H_{F_{{\rm unr}}} = {\rm Gal}(\overline{F}/F_{{\rm unr}, \infty})$ acts trivially on $D_{\pi}^{\dag}$. 
Combining Lemma \ref{galoistheoryofringofperiods} we have 
\begin{align}
\mathbf{M}_{\pi, \pi'}(D_{\pi}^{\dag}) &= \left( (\mathbf{B}^{\dag} \otimes_{\mathbf{B}_{F, \pi}^{\dag}} D_{\pi}^{\dag})^{H_{F_{{\rm unr}}}} \right) ^{{\rm Gal}(F_{{\rm unr}, \infty} / F_{\infty, \pi'})} \notag \\
&= (\mathbf{B}_{\widehat{F}_{{\rm unr}}}^{\dag} \otimes_{\mathbf{B}_{F, \pi}^{\dag}} D_{\pi}^{\dag})^{{\rm Gal}(F_{{\rm unr}, \infty} / F_{\infty, \pi'})}. \notag
\end{align}

\begin{lem} \label{galoistheoryofringofperiods}
We have $(\mathbf{B}^{\dag})^{H_{F_{{\rm unr}}}} = \mathbf{B}_{\widehat{F}_{{\rm unr}}}^{\dag}$.
\end{lem}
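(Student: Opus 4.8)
The plan is to prove the two inclusions $(\mathbf{B}^{\dag})^{H_{F_{{\rm unr}}}} \supseteq \mathbf{B}_{\widehat{F}_{{\rm unr}}}^{\dag}$ and $(\mathbf{B}^{\dag})^{H_{F_{{\rm unr}}}} \subseteq \mathbf{B}_{\widehat{F}_{{\rm unr}}}^{\dag}$ separately. The first inclusion should be essentially formal: by the construction recalled in \S2, $\mathbf{B}_{\widehat{F}_{{\rm unr}}}^{\dag}$ consists of convergent power series $f(u_\pi) = \sum_k a_k u_\pi^k$ with $a_k \in \widehat{F}_{{\rm unr}}$ subject to the growth and boundedness conditions, and it sits inside $\widetilde{\mathbf{B}}^{\dag}$; since $\widehat{F}_{{\rm unr}} \subset \mathbf{B}$ and $u_\pi \in \mathbf{B}$, the ring $\mathbf{B}_{\widehat{F}_{{\rm unr}}}^{\dag}$ lands in $\mathbf{B}^{\dag} = \widetilde{\mathbf{B}}^{\dag} \cap \mathbf{B}$. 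It remains to see that $H_{F_{{\rm unr}}} = {\rm Gal}(\overline{F}/F_{{\rm unr},\infty})$ acts trivially there: $H_{F_{{\rm unr}}}$ fixes $\widehat{F}_{{\rm unr}}$ because $F_{{\rm unr}} \subset F_{{\rm unr},\infty}$ and completion is compatible with the Galois action, and it fixes $u_\pi$ because $H_{F_{{\rm unr}}} \subset H_\pi$ (as $F_{\infty,\pi} \subset F_{{\rm unr},\infty}$) and $u_\pi$ is $H_\pi$-invariant by its characterization via Lemma \ref{lem-FoN}. Hence every element of $\mathbf{B}_{\widehat{F}_{{\rm unr}}}^{\dag}$ is $H_{F_{{\rm unr}}}$-fixed.

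For the reverse inclusion, the strategy is to compute $H_{F_{{\rm unr}}}$-invariants inside $\mathbf{B}$ first and then intersect with $\widetilde{\mathbf{B}}^{\dag}$. Since $\mathbf{B}$ is the $\pi$-adic completion of the maximal unramified extension $\mathbf{B}'_\pi$ of $\mathbf{B}_{F,\pi}$ inside $\widetilde{\mathbf{A}}^{+}$, and by Lemma \ref{lem-FoN} the Galois group ${\rm Gal}(\mathbf{B}'_\pi/\mathbf{B}_{F,\pi}) \simeq H_\pi$ acts on the residue field $\mathbf{E}$ through the identification of $H_\pi$ with ${\rm Gal}(\overline{F}/F_{\infty,\pi})$ acting on $\widetilde{\mathbf{E}}^{+}$, the subgroup $H_{F_{{\rm unr}}} \subset H_\pi$ corresponds to ${\rm Gal}$ of $\mathbf{B}'_\pi$ over the unramified extension of $\mathbf{B}_{F,\pi}$ whose residue field is the fixed subfield of $\mathbf{E}$ under ${\rm Gal}(\overline{F}/F_{{\rm unr},\infty})$. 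I would identify that residue field: since $F_{{\rm unr},\infty} = F_{{\rm unr}}\cdot F_{\infty,\pi}$, the corresponding residue field should be $\overline{\mathbf{F}}_q((\overline u_\pi))$ (the compositum inside $\mathbf{E}$ of $\overline{\mathbf{F}}_q$, which is the residue field coming from $\widehat{F}_{{\rm unr}}$, with $\mathbf{F}_q((\overline u_\pi))$). Taking the $\pi$-adic completion of the corresponding unramified extension of $\mathbf{B}_{F,\pi}$ yields exactly $\mathbf{B}_{\widehat{F}_{{\rm unr}}}$, so $\mathbf{B}^{H_{F_{{\rm unr}}}} = \mathbf{B}_{\widehat{F}_{{\rm unr}}}$. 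Intersecting with $\widetilde{\mathbf{B}}^{\dag}$ and using that $\mathbf{B}_{\widehat{F}_{{\rm unr}}}^{\dag} = \widetilde{\mathbf{B}}^{\dag} \cap \mathbf{B}_{\widehat{F}_{{\rm unr}}}$ (the overconvergent subring is cut out inside $\mathbf{B}_{\widehat{F}_{{\rm unr}}}$ exactly by the overconvergence condition, i.e.\ by membership in $\widetilde{\mathbf{B}}^{\dag}$), one obtains $(\mathbf{B}^{\dag})^{H_{F_{{\rm unr}}}} = \widetilde{\mathbf{B}}^{\dag} \cap \mathbf{B}^{H_{F_{{\rm unr}}}} = \widetilde{\mathbf{B}}^{\dag} \cap \mathbf{B}_{\widehat{F}_{{\rm unr}}} = \mathbf{B}_{\widehat{F}_{{\rm unr}}}^{\dag}$.

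Alternatively, and perhaps more robustly, I would argue purely at the level of $\widetilde{\mathbf{B}}^{\dag}$: an element $x \in \mathbf{B}^{\dag} = \widetilde{\mathbf{B}}^{\dag} \cap \mathbf{B}$ fixed by $H_{F_{{\rm unr}}}$ is in particular fixed by $H_\pi \supset H_{F_{{\rm unr}}}$? — no, $H_{F_{{\rm unr}}}$ is the smaller group, so this does not immediately apply. Instead I would note $x \in (\widetilde{\mathbf{B}}^{\dag})^{H_{F_{{\rm unr}}}}$ and use Ax--Sen--Tate type results or Berger's description (\S3 of \cite{Ber}) identifying $(\widetilde{\mathbf{B}}^{\dag})^{H_{F_{{\rm unr}}}}$ with the overconvergent periods attached to $\widehat{F}_{{\rm unr}}$, then intersect with $\mathbf{B}$; the point is that $\widehat{F}_{{\rm unr}}$ is precisely the completion of $\overline{F}^{H_{F_{{\rm unr}}}} = F_{{\rm unr},\infty}$... one must be slightly careful since $F_{{\rm unr},\infty}$ is not complete and its completion involves the Lubin--Tate direction too, but this is exactly the content of the definition of $\mathbf{B}_{\widehat{F}_{{\rm unr}}}^{\dag}$ as the ring of overconvergent power series in $u_\pi$ over $\widehat{F}_{{\rm unr}}$.

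\textbf{Main obstacle.} The delicate point is the reverse inclusion, specifically showing that an $H_{F_{{\rm unr}}}$-invariant element of $\mathbf{B}^{\dag}$ must have its coefficients (in a $u_\pi$-expansion) lie in $\widehat{F}_{{\rm unr}}$ rather than in some larger ring; equivalently, correctly identifying the residue field fixed by ${\rm Gal}(\overline{F}/F_{{\rm unr},\infty})$ inside the separable closure $\mathbf{E}$ of $\mathbf{F}_q((\overline u_\pi))$ and checking that its $\pi$-adic (unramified) lift and completion is exactly $\mathbf{B}_{\widehat{F}_{{\rm unr}}}$. This rests on the interplay between the two "directions" in $F_{{\rm unr},\infty}$ — the unramified tower and the Lubin--Tate tower — and on compatibly tracking the Galois action on residue fields via Lemma \ref{lem-FoN}; everything else (the easy inclusion, the compatibility of taking $H_{F_{{\rm unr}}}$-invariants with intersecting by $\widetilde{\mathbf{B}}^{\dag}$) is formal.
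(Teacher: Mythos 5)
Your skeleton coincides with the paper's (reduce to showing $\mathbf{B}^{H_{F_{{\rm unr}}}} = \mathbf{B}_{\widehat{F}_{{\rm unr}}}$, compute invariants of the uncompleted maximal unramified extension $\mathbf{B}^{\prime}_{\pi}$ via residue fields, then pass to the completion), and your easy inclusion and the intersection-with-$\widetilde{\mathbf{B}}^{\dag}$ bookkeeping are fine. But there is a genuine gap at the decisive step: from the identification of $(\mathbf{B}^{\prime}_{\pi})^{H_{F_{{\rm unr}}}}$ you conclude ``taking the $\pi$-adic completion \dots yields exactly $\mathbf{B}_{\widehat{F}_{{\rm unr}}}$, so $\mathbf{B}^{H_{F_{{\rm unr}}}} = \mathbf{B}_{\widehat{F}_{{\rm unr}}}$.'' This silently assumes that taking $H_{F_{{\rm unr}}}$-invariants commutes with $\pi$-adic completion. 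Only the inclusion $\mathbf{B}_{\widehat{F}_{{\rm unr}}} \subseteq \mathbf{B}^{H_{F_{{\rm unr}}}}$ is formal; the reverse inclusion is an Ax--Sen--Tate-type statement and is the actual content of the lemma. The paper proves it by taking $\alpha \in \mathbf{B}^{H_{F_{{\rm unr}}}}$, choosing $\alpha_{n} \in \mathbf{B}^{\prime}_{\pi}$ with $v(\alpha - \alpha_{n}) \geq n$, noting that each $\alpha_{n}$ is then almost invariant ($v(g(\alpha_{n}) - \alpha_{n}) \geq n$ for all $g \in H_{F_{{\rm unr}}}$), and applying Ax's theorem \cite{Ax} to produce $a_{n} \in \mathbf{B}_{F_{{\rm unr}},\pi}$ with $v(\alpha_{n} - a_{n}) \geq n - \frac{p}{(p-1)^{2}}v(p)$, so that $\alpha = \lim_{n} a_{n} \in \mathbf{B}_{\widehat{F}_{{\rm unr}}}$. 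You gesture at ``Ax--Sen--Tate type results'' in your alternative sketch and flag the difficulty in your closing paragraph, but your main argument never supplies this step, and nothing in it substitutes for it.

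A secondary slip is symptomatic of the same completed-versus-uncompleted confusion: the fixed subfield of the residue field of $\mathbf{B}^{\prime}_{\pi}$ (a separable closure of $\mathbf{F}_{q}((\overline{u}_{\pi}))$) under $H_{F_{{\rm unr}}}$ is not $\overline{\mathbf{F}}_{q}((\overline{u}_{\pi}))$; that field is not even contained in the separable closure, since a Laurent series whose coefficients generate an infinite extension of $\mathbf{F}_{q}$ is not algebraic over $\mathbf{F}_{q}((\overline{u}_{\pi}))$. The correct fixed field is $\bigcup_{n} \mathbf{F}_{q^{n}}((\overline{u}_{\pi}))$, corresponding to $(\mathbf{B}^{\prime}_{\pi})^{H_{F_{{\rm unr}}}} = \mathbf{B}_{F_{{\rm unr}},\pi} = \bigcup_{n} \mathbf{B}_{F_{n},\pi}$, which the paper obtains by a degree count at each finite unramified level using Lemma \ref{lem-FoN}. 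The discrepancy between this union and its $\pi$-adic completion $\mathbf{B}_{\widehat{F}_{{\rm unr}}}$ is exactly the gap that the approximation argument via Ax's theorem is needed to close.
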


\begin{proof}
We follow the argument in the proof of \cite[Proposition 3.8]{FO}. 

For an unramified extention $K/F$, let $\mathbf{B}_{K, \pi}$ be the fraction field of the $\pi$-adic completion of $\mathcal{O}_{K}[\![u_{\pi}]\!][1/u_{\pi}]$. 
By the definition of $\mathbf{B}^{\dag}$, it suffices to show $\mathbf{B}^{H_{F_{{\rm unr}}}} = \mathbf{B}_{\widehat{F}_{{\rm unr}}}$.

First, we show $(\mathbf{B}^{\prime}_{\pi})^{H_{F_{{\rm unr}}}} = \mathbf{B}_{F_{{\rm unr}}, \pi}$. 
Let $F_{n}$ be the $n$-dimentional unramified extension of $F$ for $n \geq 1$. Let $F_{n, \infty, \pi} = F_{n} \cdot F_{\infty, \pi}$ and $H_{n, \pi} = {\rm Gal}(\overline{F} / F_{n, \infty, \pi})$. 
By the definition, $\mathbf{B}_{F_{n}, \pi}$ is an unramified extension of $\mathbf{B}_{F, \pi}$. 
The residue field of $\mathbf{B}_{F_{n}, \pi}$ is $\mathbf{F}_{q^{n}}((\overline{u}_{\pi}))$ on which $H_{n, \pi}$ acts trivially. 
Hence, $H_{n, \pi}$ acts trivially on $\mathbf{B}_{F_{n}, \pi}$ by Lemma \ref{lem-FoN} and then we have $[\mathbf{B}_{F_{n}, \pi} : \mathbf{B}_{F, \pi}] = n$. 
Because $[(\mathbf{B}^{\prime}_{\pi})^{H_{n, \pi}}: \mathbf{B}_{F, \pi}] = [F_{n, \infty, \pi}: F_{\infty, \pi}] = n$, we have $(\mathbf{B}^{\prime}_{\pi})^{H_{n, \pi}} = \mathbf{B}_{F_{n}, \pi}$. 
As a result, $(\mathbf{B}^{\prime}_{\pi})^{H_{F_{{\rm unr}}}} = \mathbf{B}_{F_{{\rm unr}}, \pi}$. 

For any $\alpha \in \mathbf{B}^{H_{F_{{\rm unr}}}}$, we choose a sequence of $\alpha_{n} \in \mathbf{B}_{\pi}^{\prime}$ such that $v(\alpha - \alpha_{n}) \geq n$, where $v$ is a non-archemedian valuation on $\mathbf{B}_{\pi}^{\prime}$. 
Then it follows that $v(g(\alpha_{n})-\alpha_{n}) \geq \min \{ v( g(\alpha_{n} - \alpha)), v(\alpha_{n} - \alpha) \} \geq n$ for any $g \in H_{F_{{\rm unr}}}$. 
Applying \cite[Proposition 1]{Ax}, we can find $a_{n} \in \mathbf{B}_{F_{{\rm unr}, \pi}}$ such that $v(\alpha_{n} - a_{n}) \geq n - \frac{p}{(p-1)^{2}}v(p)$. 
Thus we have $\alpha = \lim_{n \to \infty} a_{n} \in \mathbf{B}_{\widehat{F}_{{\rm unr}}}$. 
Hence, $\mathbf{B}^{H_{F_{{\rm unr}}}} = \mathbf{B}_{\widehat{F}_{{\rm unr}}}$. 

Consequently, $(\mathbf{B}^{\dag})^{H_{F_{{\rm unr}}}} = \mathbf{B}_{\widehat{F}_{{\rm unr}}}^{\dag}$. 
\oqed\end{proof}

Recall the reciprocity map $r$ in Lemma \ref{reciprocitymap}. 
Because the restriction map ${\rm Gal}(F_{{\rm unr}, \infty} / F_{\infty, \pi}) \to {\rm Gal}(F_{{\rm unr}}/F)$ induces isomorphism, $r(\pi)$ is a topological generator of ${\rm Gal}(F_{{\rm unr}, \infty} / F_{\infty, \pi})$. 
Thus if we want to calculate $\mathbf{M}_{\pi, \pi'}(D_{\pi}^{\dag})$, find the elements in $\RobbaB \otimes_{\Robba{\pi}} D_{\pi}^{\dag}$ fixed by $r(\pi')$. 
Let $u \in \mathcal{O}_{F}^{\times}$ be the element which satisfies $\pi' = u\pi$. 
Then we have $r(\pi') = r(u)r(\pi)$, thus $r(\pi') |_{F_{\infty, \pi}} = \chi_{\pi}^{-1}(u^{-1})$ by the characterization of reciprocity map (see Lemma \ref{reciprocitymap}). 
Consequence of above arguments, we have the following theorem.

\begin{thm} \label{Maintheorem}
We have $\mathbf{M}_{\pi, \pi'}(D_{\pi}^{\dag}) = (\RobbaB \otimes_{\mathbf{B}_{F, \pi}^{\dag}} D_{\pi}^{\dag})^{r(\pi') \otimes \chi_{\pi}^{-1}(u^{-1}) = 1}$.
\end{thm}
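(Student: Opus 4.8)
The plan is to assemble Theorem \ref{Maintheorem} from the chain of identifications already laid out just before its statement, so the proof is essentially bookkeeping once the right generator of the relevant Galois group is pinned down. First I would recall the computation
\[
\mathbf{M}_{\pi, \pi'}(D_{\pi}^{\dag}) = (\mathbf{B}_{\widehat{F}_{{\rm unr}}}^{\dag} \otimes_{\mathbf{B}_{F, \pi}^{\dag}} D_{\pi}^{\dag})^{{\rm Gal}(F_{{\rm unr}, \infty} / F_{\infty, \pi'})},
\]
obtained by inserting the intermediate fixed points under $H_{F_{{\rm unr}}}$ and applying Lemma \ref{galoistheoryofringofperiods}. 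So it remains to rewrite the outer invariants in terms of a single explicit element acting on $\mathbf{B}_{\widehat{F}_{{\rm unr}}}^{\dag} \otimes D_{\pi}^{\dag}$.

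Next I would use that the restriction ${\rm Gal}(F_{{\rm unr}, \infty}/F_{\infty, \pi'}) \to {\rm Gal}(F_{{\rm unr}}/F)$ is an isomorphism, so this group is topologically cyclic and is topologically generated by $r(\pi')$: indeed $r(\pi')$ is the identity on $F_{\infty, \pi'}$ by Proposition \ref{reciprocitymap}(1), hence lies in ${\rm Gal}(F_{{\rm unr}, \infty}/F_{\infty, \pi'})$, and restricts to $\sigma_{q}$ on $F_{{\rm unr}}$, which is a topological generator of ${\rm Gal}(F_{{\rm unr}}/F) \cong \widehat{\mathbf{Z}}$. Taking invariants under a topologically cyclic group acting continuously is the same as taking the fixed points of any topological generator, so
\[
\mathbf{M}_{\pi, \pi'}(D_{\pi}^{\dag}) = (\mathbf{B}_{\widehat{F}_{{\rm unr}}}^{\dag} \otimes_{\mathbf{B}_{F, \pi}^{\dag}} D_{\pi}^{\dag})^{r(\pi') = 1}.
\]
Here $r(\pi')$ acts diagonally: on $\mathbf{B}_{\widehat{F}_{{\rm unr}}}^{\dag}$ through its Galois action (lifting the action on $\widehat{F}_{{\rm unr}}$ and on $u_{\pi}$), and on $D_{\pi}^{\dag}$ through the factorization ${\rm Gal}(F_{{\rm unr}, \infty}/F_{\infty, \pi'}) \hookrightarrow {\rm Gal}(\overline F/F_{\infty,\pi'})$ followed by the quotient to ${\rm Gal}(\overline F/F_{\infty,\pi'})$ acting on $D_{\pi}^{\dag}$; since $H_{F_{{\rm unr}}}$ acts trivially on $D_{\pi}^{\dag}$, this action factors through ${\rm Gal}(F_{{\rm unr},\infty}/F_{\infty,\pi'})$ and is given by the image of $r(\pi')$ in $\Gamma_{\pi} = {\rm Gal}(F_{\infty,\pi}/F)$.

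The one substantive point, and the step I expect to require the most care, is identifying precisely how $r(\pi')$ acts on the $D_{\pi}^{\dag}$-factor, i.e. which element of $\Gamma_{\pi}$ it maps to. Writing $\pi' = u\pi$ with $u \in \mathcal{O}_{F}^{\times}$ gives $r(\pi') = r(u)r(\pi)$; by Proposition \ref{reciprocitymap}(1) $r(\pi)$ is trivial on $F_{\infty,\pi}$, and by (2) $r(u)$ restricts to $\chi_{\pi}^{-1}(u^{-1})$ on $F_{\infty,\pi}$, so $r(\pi')|_{F_{\infty,\pi}} = \chi_{\pi}^{-1}(u^{-1}) \in \Gamma_{\pi}$. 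Therefore on $D_{\pi}^{\dag}$ the element $r(\pi')$ acts as the group element $\chi_{\pi}^{-1}(u^{-1}) \in \Gamma_{\pi}$, and the condition ``$r(\pi')$ acts as the identity on $\mathbf{B}_{\widehat{F}_{{\rm unr}}}^{\dag}\otimes D_{\pi}^{\dag}$'' is exactly the condition ``$r(\pi') \otimes \chi_{\pi}^{-1}(u^{-1}) = 1$'' in the notation of the statement (the symbol $\chi_{\pi}^{-1}(u^{-1})$ on the second tensor slot denoting its action via $\Gamma_{\pi}$ on $D_{\pi}^{\dag}$). Collecting the three displays yields the claimed formula, so the only genuine risk is a sign/inverse slip in the class field theory normalization, which the explicit invocation of Proposition \ref{reciprocitymap} is designed to control. \oqed
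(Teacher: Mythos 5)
Your proposal is correct and follows essentially the same route as the paper: reduce to $\mathbf{B}^{\dag}_{\widehat{F}_{{\rm unr}}}$-coefficients via Lemma \ref{galoistheoryofringofperiods} and the triviality of the $H_{F_{{\rm unr}}}$-action on $D_{\pi}^{\dag}$, note that $r(\pi')$ topologically generates ${\rm Gal}(F_{{\rm unr},\infty}/F_{\infty,\pi'})$, and identify its restriction to $F_{\infty,\pi}$ as $\chi_{\pi}^{-1}(u^{-1})$ using $r(\pi')=r(u)r(\pi)$ and Proposition \ref{reciprocitymap}. Your explicit remarks on the continuity of the action and on how $r(\pi')$ acts on the two tensor factors are just slightly more detailed versions of what the paper leaves implicit.
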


\section{Calculation}

If $\delta \colon F^{\times} \to F^{\times}$ is a continuous character, let $\Robba{\pi}(\delta)$ be the  $\phiqGamma{\pi}$-module over $\Robba{\pi}$ of rank 1 that has a basis $e_{\pi, \delta}$ such that $\varphi_{q} ( e_{\pi, \delta} ) = \delta ( \pi ) e_{\pi, \delta}$ and $\chi_{\pi}^{-1}(a)(e_{\pi, \delta}) = \delta(a)e_{\pi, \delta}$ with $a \in \mathcal{O}_{F}^{\times}$. 
Note that $\Robba{\pi}(\delta)$ is \'etale if and only if $v_{p}(\delta(\pi)) = 0$. From here, we assume that $\Robba{\pi}(\delta)$ is \'etale. 

To carry out calculations in this section, we often use the next lemma. 

\begin{lem} \label{solveeq}
Suppose $\varepsilon_{1} \in \mathcal{O}_{\widehat{F}_{{\rm unr}}}^{\times}$ and $\varepsilon_{2} \in \mathcal{O}_{\widehat{F}_{{\rm unr}}}$. Then there exists $x \in \mathcal{O}_{\widehat{F}_{{\rm unr}}}^{\times}$ such that $\varepsilon_{1}\sigma_{q}(x) - x = \varepsilon_{2}$. 

\end{lem}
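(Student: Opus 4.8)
The plan is to solve the equation $\varepsilon_1 \sigma_q(x) - x = \varepsilon_2$ by successive approximation $\pi$-adically, exploiting the fact that modulo $\pi$ the operator $\sigma_q$ becomes the Frobenius $y \mapsto y^q$ on the algebraically closed residue field $\overline{\mathbf{F}}_q$, where such additive equations are always solvable. First I would reduce the equation modulo $\pi$: writing $\overline{\varepsilon}_i$ for the images in $\overline{\mathbf{F}}_q = \mathcal{O}_{\widehat{F}_{\rm unr}}/\pi$, I seek $\overline{x} \in \overline{\mathbf{F}}_q$ with $\overline{\varepsilon}_1 \overline{x}^{q} - \overline{x} = \overline{\varepsilon}_2$. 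Since $\overline{\varepsilon}_1 \neq 0$, the map $t \mapsto \overline{\varepsilon}_1 t^q - t$ is a separable additive (indeed $\mathbf{F}_q$-semilinear up to the leading coefficient) polynomial map $\overline{\mathbf{F}}_q \to \overline{\mathbf{F}}_q$; it is nonconstant, hence surjective because $\overline{\mathbf{F}}_q$ is algebraically closed. So a solution $\overline{x}$ exists, and one can arrange $\overline{x} \neq 0$: if the produced solution were $0$ then $\overline{\varepsilon}_2 = 0$, and I may instead take any nonzero root of $\overline{\varepsilon}_1 t^q - t = 0$, which exists since this polynomial has $q > 1$ roots. Lift $\overline{x}$ to some $x_0 \in \mathcal{O}_{\widehat{F}_{\rm unr}}^{\times}$.

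Next I would iterate. Given $x_n \in \mathcal{O}_{\widehat{F}_{\rm unr}}^{\times}$ with $\varepsilon_1 \sigma_q(x_n) - x_n \equiv \varepsilon_2 \pmod{\pi^{n+1}}$, write $\varepsilon_1 \sigma_q(x_n) - x_n - \varepsilon_2 = \pi^{n+1} c_n$ with $c_n \in \mathcal{O}_{\widehat{F}_{\rm unr}}$, and look for a correction $x_{n+1} = x_n + \pi^{n+1} y_n$. Then
\begin{align}
\varepsilon_1 \sigma_q(x_{n+1}) - x_{n+1} - \varepsilon_2
&= \pi^{n+1} c_n + \pi^{n+1}\bigl(\varepsilon_1 \sigma_q(y_n) - y_n\bigr) \notag \\
&= \pi^{n+1}\bigl(\varepsilon_1 \sigma_q(y_n) - y_n + c_n\bigr), \notag
\end{align}
so it suffices to solve $\varepsilon_1 \sigma_q(y_n) - y_n = -c_n$ modulo $\pi$, which is the same type of residue-field equation handled in the first step and is therefore solvable. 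Setting $x = \lim_{n \to \infty} x_n$, which converges in the $\pi$-adically complete ring $\mathcal{O}_{\widehat{F}_{\rm unr}}$, gives $\varepsilon_1 \sigma_q(x) - x = \varepsilon_2$ exactly. Finally $x \in \mathcal{O}_{\widehat{F}_{\rm unr}}^{\times}$ because $x \equiv x_0 \not\equiv 0 \pmod{\pi}$.

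The only genuinely substantive point is the surjectivity of $t \mapsto \overline{\varepsilon}_1 t^q - t$ on $\overline{\mathbf{F}}_q$; everything else is the standard $\pi$-adic Hensel-type bootstrap. I would also remark that one should double-check the compatibility of $\sigma_q$ with the $\pi$-adic filtration — namely that $\sigma_q$ induces the $q$-power Frobenius on $\mathcal{O}_{\widehat{F}_{\rm unr}}/\pi$, which follows since $\sigma_q$ is the (continuous extension of the) arithmetic Frobenius on $F_{\rm unr}$ and $\widehat{F}_{\rm unr}$ is absolutely unramified over $F$ in the relevant sense, i.e.\ $\pi$ generates the maximal ideal and the residue extension is $\overline{\mathbf{F}}_q/\mathbf{F}_q$. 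If one prefers to avoid invoking $\sigma_q \bmod \pi$ being exactly $t \mapsto t^q$, the same argument works verbatim using only that $\sigma_q$ reduces to a surjective $\mathbf{F}_q$-semilinear ring endomorphism with the property that $t \mapsto \overline{\varepsilon}_1\,\overline{\sigma_q}(t) - t$ is nonconstant on the algebraically closed residue field.
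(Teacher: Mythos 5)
Your proof is correct and follows essentially the same route as the paper: solve the Artin--Schreier-type equation $\overline{\varepsilon}_1\overline{x}^{q}-\overline{x}=\overline{\varepsilon}_2$ in the algebraically closed residue field $\overline{\mathbf{F}}_q$, then remove the error term by successive $\pi$-adic corrections, which converge since $\mathcal{O}_{\widehat{F}_{\rm unr}}$ is $\pi$-adically complete. Your treatment is in fact slightly more careful than the paper's at the first step (ensuring a nonzero residue when $\overline{\varepsilon}_2=0$, so that $x$ is a unit), but the method is the same.
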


\begin{proof}
Note that $\varepsilon_{1}\sigma_{q}(x)-x \mod \pi = \overline{\varepsilon}_{1}\overline{x}^{q} - \overline{x}$ and $\overline{\varepsilon}_{1}\overline{x}^{q} - \overline{x} = a$ with $a \in \overline{\mathbf{F}}_{q}^{\times}$ has a solution in $\overline{\mathbf{F}}_{q}^{\times}$. 
Therefore there exists $x_{0} \in \mathcal{O}_{\widehat{F}_{{\rm unr}}}^{\times}$ satisfying $\varepsilon_{1}\sigma_{q}(x_{0}) - x_{0} = \varepsilon_{2} - \pi^{n_{0}}a_{0}$ with $n_{0} > 0$ and $a_{0} \in \mathcal{O}_{\widehat{F}_{{\rm unr}}}^{\times}$. 
Inductively, let $x_{m}$ be a lift of a solution of $\overline{\varepsilon}_{1}\overline{x}^{q} - \overline{x} = \overline{a}_{m-1}$ and $n_{m}$ and $a_{m} \in \mathcal{O}_{\widehat{F}_{{\rm unr}}}$ be the elements which satisfy $\varepsilon_{1}\sigma_{q}(\pi^{n_{m-1}}x_{m-1}) - \pi^{n_{m-1}}x_{m-1} = \pi^{n_{m-1}}a_{m-1} - \pi^{n_{m}}a_{m}$. 
Because $n_{m} > n_{m-1}$, $x =\sum_{m \geq 0}\pi^{n_m}a_m$ converges and satisfies $\varepsilon_{1}\sigma_{q}(x) - x = \varepsilon_{2}$. 
\oqed\end{proof}

\begin{prop}\label{modinducedbychar}
For the \'etale $\phiqGamma{\pi}$-module $\Robba{\pi}(\delta)$, we have $\mathbf{M}_{\pi, \pi'}(\Robba{\pi}(\delta)) = \Robba{\pi'}(\delta)$. 
\end{prop}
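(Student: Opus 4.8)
The plan is to use the explicit description of $\mathbf{M}_{\pi,\pi'}$ from Theorem \ref{Maintheorem} and to write down a candidate generator of $\mathbf{M}_{\pi,\pi'}(\Robba{\pi}(\delta))$ inside $\RobbaB \otimes_{\Robba{\pi}} \Robba{\pi}(\delta)$ of the shape $x \otimes e_{\pi,\delta}$ with $x \in \mathcal{O}_{\widehat{F}_{\rm unr}}^{\times}$ (or in $\RobbaB$), and then to identify the resulting rank-one $\phiqGamma{\pi'}$-module with $\Robba{\pi'}(\delta)$. First I would unwind what the operator $r(\pi')$ does on such a tensor: since $r(\pi')$ acts on $\RobbaB$ through its restriction to $F_{\rm unr}$, which by Proposition \ref{reciprocitymap}(1) is $\sigma_q$ on the coefficients, and $r(\pi')|_{F_{\infty,\pi}} = \chi_\pi^{-1}(u^{-1})$, the fixed-point condition $r(\pi') \otimes \chi_\pi^{-1}(u^{-1}) = 1$ applied to $x \otimes e_{\pi,\delta}$ becomes, after using $\chi_\pi^{-1}(u^{-1})(e_{\pi,\delta}) = \delta(u^{-1}) e_{\pi,\delta}$ and $\varphi_q$ acting on $u_\pi$-power series, a twisted semilinear equation for $x$ of the form $\varepsilon_1 \sigma_q(x) = x$ up to the unit $\delta(u)$ — precisely the type of equation solved by Lemma \ref{solveeq} (with $\varepsilon_2 = 0$ after rescaling, or rather with the homogeneous version, so one should be slightly careful and instead look for $x$ with $\varepsilon_1 \sigma_q(x) - x = 0$ having a unit solution, which follows from the same Artin–Schreier-type argument mod $\pi$ and successive approximation).

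Next, having produced a nonzero fixed element $f_{\pi',\delta} = x \otimes e_{\pi,\delta}$, I would check it generates a free rank-one $\Robba{\pi'}$-module: by Theorem \ref{Maintheorem} combined with Proposition \ref{compatibilityofM} and the equivalence of categories in Proposition \ref{prop-overconvergentcatequivalence}, $\mathbf{M}_{\pi,\pi'}(\Robba{\pi}(\delta))$ is an étale $\phiqGamma{\pi'}$-module of the same rank as $\Robba{\pi}(\delta)$, namely rank one, so any nonzero element whose $\Robba{\pi'}$-span is saturated will do; since $x$ is a unit in $\mathcal{O}_{\widehat{F}_{\rm unr}}$ this saturation is automatic. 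Then I would compute the action of $\varphi_q$ and of $\Gamma_{\pi'}$ on $f_{\pi',\delta}$. For $\varphi_q$: $\varphi_q(f_{\pi',\delta}) = \varphi_q(x) \otimes \varphi_q(e_{\pi,\delta}) = \delta(\pi)\, \varphi_q(x) \otimes e_{\pi,\delta}$, and using $\sigma_q(x) = \varepsilon_1^{-1} x$ (or whatever normalization the fixed-point equation gives) together with $\pi' = u\pi$ one rewrites this as $\delta(\pi') f_{\pi',\delta}$; the bookkeeping of units here is where the relation $^{\sigma_q}\phi = \phi \circ [u]_\pi$ from Lemma \ref{lem-formalgroup} and the identity $\phi(u_\pi) = u_{\pi'}$ from Proposition \ref{prop-coordinatetransformation} enter, because $\varphi_q$ on $\Robba{\pi'}$ is the map $u_{\pi'} \mapsto [\pi']_{\pi'}(u_{\pi'})$ and I must relate it to $\varphi_q$ on the $\pi$-side. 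For $\gamma' \in \Gamma_{\pi'}$: $\gamma'$ fixes $F_{\rm unr}$ pointwise, so it acts trivially on $x \in \mathcal{O}_{\widehat{F}_{\rm unr}}$ and only through its action on $\Robba{\pi}$ (via the identification of $\Gamma_{\pi'}$ with a subgroup of $\mathrm{Gal}(F_{{\rm unr},\infty}/F)$) and on $e_{\pi,\delta}$, which should produce the scalar $\delta(\chi_{\pi'}(\gamma'))$ after translating $\chi_{\pi'}$ to $\chi_\pi$ via $\pi' = u\pi$.

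The main obstacle I anticipate is the precise tracking of units and of the two different Frobenius/coordinate normalizations: the generator $e_{\pi,\delta}$ satisfies $\varphi_q(e_{\pi,\delta}) = \delta(\pi) e_{\pi,\delta}$ on the $\pi$-side, but $\Robba{\pi'}(\delta)$ is defined with $\varphi_q(e_{\pi',\delta}) = \delta(\pi') e_{\pi',\delta}$ relative to the coordinate $u_{\pi'}$, and the isomorphism $\Robba{\pi} \hookrightarrow \RobbaB \hookleftarrow \Robba{\pi'}$ does not send $u_\pi$ to $u_{\pi'}$ but rather $\phi(u_\pi) = u_{\pi'}$ with $\phi$ only defined over $\mathcal{O}_{\widehat{F}_{\rm unr}}$; so $f_{\pi',\delta}$ must be chosen compatibly with this change of coordinates, and the unit $\varepsilon$ (resp. $\varepsilon_1$) appearing in Lemma \ref{lem-formalgroup} and Lemma \ref{solveeq} must conspire so that both the $\varphi_q$-equation and the $\Gamma$-equations come out with exactly the scalars $\delta(\pi')$ and $\delta(\chi_{\pi'}(\gamma'))$. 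Once the correct $x$ is pinned down, the verification of the remaining relations and of étaleness is routine given Theorem \ref{Maintheorem} and Proposition \ref{prop-overconvergentcatequivalence}.
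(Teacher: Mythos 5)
Your construction is essentially the paper's own proof: the paper takes $t \in \widehat{F}_{\rm unr}^{\times}$ with $\sigma_{q}(t)\delta(u^{-1}) = t$ (the homogeneous case of Lemma \ref{solveeq}), observes that $t \otimes e_{\pi, \delta}$ is fixed by $r(\pi') \otimes \chi_{\pi}^{-1}(u^{-1})$, and then checks that $\varphi_{q}$ multiplies it by $\delta(\pi')$ and that the lift of $\chi_{\pi'}^{-1}(a)$ which is trivial on $F_{\rm unr}$, namely $r(a^{-1})$, restricts to $\chi_{\pi}^{-1}(a)$ on $F_{\infty, \pi}$ and hence acts by $\delta(a)$ --- exactly your outline. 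Two corrections to your justifications. First, you should not invoke Proposition \ref{compatibilityofM} to conclude a priori that $\mathbf{M}_{\pi, \pi'}(\Robba{\pi}(\delta))$ has rank one: that proposition assumes the attached representation is already $\pi'$-overconvergent, which is in effect what the present proposition establishes, so this step is circular; the correct order is to exhibit the fixed unit element first (giving rank at least one) and then use the standard bound that the invariants have rank at most $\mathrm{rank}\,\Robba{\pi}(\delta) = 1$, which is how the paper argues. Second, the bookkeeping with $\phi$, $u_{\pi'}$ versus $u_{\pi}$, and the unit $\varepsilon$ of Lemma \ref{lem-formalgroup} that you flag as the main obstacle never enters: the generator is $x \otimes e_{\pi, \delta}$ with $x$ a constant of $\widehat{F}_{\rm unr}$, on which $\varphi_{q}$ acts by $\sigma_{q}$, so
\[
\varphi_{q}(x \otimes e_{\pi, \delta}) = \sigma_{q}(x)\,\delta(\pi) \otimes e_{\pi, \delta} = \delta(u)\delta(\pi)\, x \otimes e_{\pi, \delta} = \delta(\pi')\, x \otimes e_{\pi, \delta}
\]
directly from the defining equation of $x$; Proposition \ref{prop-coordinatetransformation} is needed only for the compatible inclusions $\Robba{\pi}, \Robba{\pi'} \subset \RobbaB$ already established in Section 2, not for the computation of the scalars.
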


\begin{proof}
Note that $\delta(\mathcal{O}_{F}^{\times}) \subset \mathcal{O}_{F}^{\times}$ from continuity of $\delta$. 
From Lemma \ref{solveeq}, there exists $t \in \widehat{F}_{{\rm unr}}^{\times}$ which satisfies $\sigma_{q}(t)\delta(u^{-1}) = t$, 
which means that $r(\pi') \otimes \chi_{\pi}^{-1}(u^{-1}) (t \otimes e_{\pi, \delta}) = t \otimes e_{\pi, \delta}$. 
Therefore $t \otimes e_{\pi, \delta}$ is a basis of $\mathbf{M}_{\pi, \pi'}(\Robba{\pi}(\delta))$ and it is of rank 1. 

To finish this proof, it is enough to show that the actions of $\Gamma_{\pi'}$ and $\varphi_{q}$ at $t \otimes e_{\pi, \delta}$ matches the actions at $e_{\pi', \delta}$. 
Let $a \in \mathcal{O}_{F}^{\times}$ and $\alpha$ be the lift of $\chi_{\pi'}^{-1}(a)$ in ${\rm Gal}(F_{{\rm unr}, \infty}/F)$ whose restriction to ${\rm Gal}(F_{{\rm unr}}/F)$ is trivial, which corresponds to $a^{-1}$ with the reciplosity map. 
By the property of the reciplocity map, we have $\alpha |_{\Gamma_{\pi}} = \chi_{\pi}^{-1}(a)$. 
Accordingly, the action of $\Gamma_{\pi'}$ at the basis $t \otimes e_{\pi, \delta}$ matches the action induced by $\delta$. 
In addition, we have $\varphi_{q}(t \otimes e_{\pi, \delta}) = \delta(\pi')t \otimes e_{\pi, \delta}$ by the definition of $t$ and an easy calculation. 
\oqed\end{proof}

Let $D_{\pi}^{\dag}$ be an extension of $\Robba{\pi}$ by $\Robba{\pi}(\delta)$ attached to 1-cocycle $c_{\pi} : \mathcal{O}_{F} \to \Robba {\pi}(\delta)$ in the sense of \cite[Proposition 4.1]{FX}. 
We have the exact sequence. 
\[
0 \to \Robba{\pi}(\delta) \to D_{\pi}^{\dag} \to \Robba{\pi} \to 0.
\]
Let $f_{\pi} \in D_{\pi}^{\dag}$ be a lift of $1 \in \Robba{\pi}$ which satisfies $\gamma(f_{\pi}) = f_{\pi} + c_{\pi}(\chi_{\pi}(\gamma))$ for any $\gamma \in \Gamma_{\pi}$ and $\varphi_{q}(f_{\pi}) = f_{\pi} + c_{\pi}(\pi)$. 
Then $\{ e_{\pi, \delta}, f_{\pi} \}$ is a basis of $D_{\pi}^{\dag}$. 
From Proposition \ref{modinducedbychar}, there is the element $t \otimes e_{\pi, \delta} \in \mathbf{M}_{\pi, \pi'}(D_{\pi}^{\dag})$ corresponding to $e_{\pi, \delta} \in D_{\pi}^{\dag}$. 
Accordingly, if we can find an element $f_{\pi'} \in \mathbf{M}_{\pi, \pi'}(D_{\pi}^{\dag})$ which satisfies $\{ t \otimes e_{\pi, \delta}, f_{\pi'} \}$ is linearly independent, we have $\mathbf{M}_{\pi, \pi'}(D_{\pi}^{\dag})$ is of rank 2.
In consequence the $F$-representation of $G_{F}$ attached to $D_{\pi}^{\dag}$ is not only $\pi$-overconvergent but also $\pi'$-overconvergent. 
We focus on elements of the form $1 \otimes f_{\pi} + S \otimes e_{\pi, \delta}$ with $S \in \RobbaB$. 
If $1 \otimes f_{\pi} + S \otimes e_{\pi, \delta} \in \mathbf{M}_{\pi, \pi'}(D_{\pi}^{\dag})$, then $S$ has to satisfy $r(\pi') \otimes \chi_{\pi}^{-1}(u^{-1})(S \otimes e_{\pi, \delta}) - S \otimes e_{\pi, \delta} = -c_{\pi}(u^{-1})$.

\begin{lem} \label{findseries}
There exists $S \in \RobbaB$ which satisfies $r(\pi') \otimes \chi_{\pi}^{-1}(u^{-1})(S \otimes e_{\pi, \delta}) - S \otimes e_{\pi, \delta} = -c_{\pi}(u^{-1})$. 
\end{lem}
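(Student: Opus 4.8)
The plan is to solve the $1$-cocycle equation
$r(\pi') \otimes \chi_{\pi}^{-1}(u^{-1})(S \otimes e_{\pi, \delta}) - S \otimes e_{\pi, \delta} = -c_{\pi}(u^{-1})$
for $S \in \RobbaB$ by working coefficient-by-coefficient in a $\pi$-adic (or $u_{\pi}$-adic) expansion, using Lemma \ref{solveeq} as the engine at each step. First I would unwind the action: writing $S = \sum_k s_k u_{\pi}^{k}$ with $s_k \in \widehat{F}_{{\rm unr}}$, the operator $r(\pi')$ acts on the coefficients through $\sigma_{q}$ (since $r(\pi')|_{F_{{\rm unr}}} = \sigma_q$) and on $u_{\pi}$ through the $\Gamma_{\pi}$-part $\chi_{\pi}^{-1}(u^{-1})$, so $r(\pi')(u_{\pi}) = [u^{-1}]_{\pi}(u_{\pi})$, which is $\varepsilon' u_{\pi} + (\text{higher order})$ for a unit $\varepsilon'$. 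Combined with the twist by $\chi_{\pi}^{-1}(u^{-1})$ on $e_{\pi,\delta}$ (a unit scalar $\delta(u^{-1}) \in \mathcal{O}_F^\times$), the equation for $S$ becomes, after clearing the $e_{\pi,\delta}$, an equation of the shape $\delta(u^{-1}) \cdot {}^{r(\pi')}S - S = -c_{\pi}(u^{-1})$ in $\RobbaB$.

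The key steps, in order, would be: (1) reduce modulo $\pi$ (or the first graded piece) to an equation of the form $\varepsilon_1 \sigma_q(\bar{S}) - \bar{S} = \overline{(\text{RHS})}$ over the residue field $\mathbf{F}_q((\overline{u}_\pi))$ or its separable closure, and solve it there — this is exactly where Lemma \ref{solveeq} (or an Artin–Schreier/additive Hilbert 90 argument over the residue field) applies; (2) lift and iterate: having solved modulo $\pi^m$, subtract and divide, obtaining a new equation of the same shape modulo $\pi$, and solve again, so that the successive corrections form a $\pi$-adically convergent series whose sum is the desired $S$; (3) check that the resulting $S$ actually lies in the overconvergent ring $\RobbaB$ and not merely in the $\pi$-adic completion $\mathbf{B}_{\widehat{F}_{{\rm unr}}}$ — this requires tracking the radii of convergence, i.e. the growth of $\mathrm{val}_p$ of the Laurent coefficients against their degree, and using that $c_{\pi}(u^{-1})$ already lies in $\Robba{\pi}(\delta) \subset \RobbaB$ together with the fact that the operator $\delta(u^{-1}) \cdot {}^{r(\pi')} - \mathrm{id}$ is "close to invertible" on the overconvergent ring because its reduction is a bijective additive map.

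The main obstacle I expect is step (3): the overconvergence of $S$. Solving the equation $\pi$-adically is essentially formal once Lemma \ref{solveeq} is in hand, but there is no a priori reason the $\pi$-adic solution should have the same (or any) radius of overconvergence — in the Lubin–Tate setting overconvergence genuinely can fail, which is the whole point of the paper. So the delicate part is to show that because the right-hand side $c_{\pi}(u^{-1})$ is overconvergent and the twisting unit $\delta(u^{-1})$ has $v_p(\delta(\pi)) = 0$ (étale hypothesis), the operator is a topological isomorphism on $\mathbf{B}^{\dag,r}_{\widehat{F}_{{\rm unr}}}$ for $r \gg 0$; concretely, one inverts $\delta(u^{-1}) \cdot {}^{r(\pi')} - \mathrm{id}$ by a geometric-series / successive-approximation argument on a single $\mathbf{B}^{[r,s]}$-piece, bounding $V(\cdot, [r,s])$ at each stage, and then glues over $s \to \infty$. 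Once $S \in \RobbaB$ is produced, the lemma follows immediately.
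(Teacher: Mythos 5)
Your steps (1)--(2) are in substance what the paper does: it solves the equation by a double induction, at each stage using Lemma \ref{solveeq} on a single coefficient (with $\varepsilon_{1}=u^{-i_{0}}\delta(u^{-1})$, exploiting that $u_{\pi}\mapsto [u^{-1}]_{\pi}(u_{\pi})$ is triangular, i.e.\ multiplies $u_{\pi}^{i}$ by a unit modulo higher-order terms), so that each correction $\pi^{v_{\rm min}}s\,u_{\pi}^{i_{0}}$ raises either the degree index $i_{0}$ or the $\pi$-adic level $v_{\rm min}$. The genuine gap is your step (3), which you yourself flag as the crux but resolve by an incorrect mechanism. The operator $A(S)=\delta(u^{-1})\,{}^{r(\pi')}S-S$ is \emph{not} a topological isomorphism on $\RobbaB$ or on any $\mathbf{B}^{[r,s]}$-piece: the element $t\in\widehat{F}_{\rm unr}^{\times}$ of Proposition \ref{modinducedbychar}, which satisfies $\sigma_{q}(t)\delta(u^{-1})=t$, lies in its kernel. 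Moreover $\sigma_{q}$ on $\widehat{F}_{\rm unr}$ and the substitution $u_{\pi}\mapsto[u^{-1}]_{\pi}(u_{\pi})$ preserve the relevant valuations, so $\delta(u^{-1})\,{}^{r(\pi')}$ has operator norm $1$ and no geometric-series/Neumann inversion of $A$ converges; solvability comes only from the residual Artin--Schreier-type equation of Lemma \ref{solveeq}, not from any contraction property.

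Consequently the overconvergence of $S$ cannot be obtained the way you propose; it must be extracted from the construction itself, and this is exactly what the paper's bookkeeping does. Writing $c_{\pi}(u^{-1})=\sum_{i}\pi^{i}d_{i}(u_{\pi})$ with $d_{i}$ Laurent polynomials/series with unit coefficients, each partial solution $S_{v_{\rm min}}=\sum\pi^{v_{\rm min}}s\,u_{\pi}^{i_{0}}$ has coefficients that are units of $\mathcal{O}_{\widehat{F}_{\rm unr}}$ times $\pi^{v_{\rm min}}$, and its pole order satisfies $\mathrm{deg}_{u_{\pi}^{-1}}S_{v_{\rm min}}\le\max_{i\le v_{\rm min}}\mathrm{deg}_{u_{\pi}^{-1}}d_{i}$, because the corrections produced at level $v_{\rm min}$ only move in the direction of increasing powers of $u_{\pi}$. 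Hence $\sum_{v_{\rm min}}S_{v_{\rm min}}$ converges in $\RobbaB$ for the same reason that $\sum_{i}\pi^{i}d_{i}(u_{\pi})$ does: the overconvergence of $S$ is inherited termwise from the overconvergence of the cocycle $c_{\pi}(u^{-1})\in\Robba{\pi}(\delta)$, rather than deduced from invertibility of $A$ on an overconvergent Banach piece. To complete your argument you would need to replace step (3) by this (or an equivalent) quantitative comparison of principal parts and $\pi$-adic valuations between the corrections and the given cocycle.
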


\begin{proof}
We put $c_{\pi}(u^{-1}) = \sum_{i \in \mathbf{Z}} c_{i}u_{\pi}^{k} \otimes e_{\pi, \delta}$ with $c_{i} \in F$. 
Let $v_{{\rm min}} = \min v(c_{i})$  and $i_{0}$ be the minimal index satisfiying $v(c_{i_{0}}) = v_{{\rm min}}$. 
Note that $[u^{-1}]_{\pi}(u_{\pi})^{-1} = u_{\pi}^{-1} u ( 1 + au_{\pi} + \cdots )$ with $1+au_{\pi} + \cdots \in \mathcal{O}_{F}[\![ u_{\pi} ]\!]$. 
Applying Lemma \ref{solveeq} with $\varepsilon_{1} = u^{-i_{0}}\delta(u^{-1})$ and $\varepsilon_{2} = -\frac{c_{i_{0}}}{\pi^{v_{{\rm min}}}}$, we can find $s \in \mathcal{O}_{\widehat{F}_{{\rm unr}}}^{\times}$ satisfying $\sigma_{q}(s)\cdot u^{-i_{0}}\delta(u^{-1}) - s = -\frac{c_{i_{0}}}{\pi^{v_{{\rm min}}}}$. 
Then $r(\pi') \otimes \chi_{\pi}^{-1}(u^{-1})(\pi^{v_{{\rm min}}}su_{\pi}^{i_{0}}\otimes e_{\pi, \delta}) - \pi^{v_{{\rm min}}}su_{\pi}^{i_{0}}\otimes e_{\pi, \delta} =  ( -c_{i_{0}}u_{\pi}^{i_{0}}+u_{\pi}^{i_{0}}\pi^{v_{{\rm min}}}\alpha ) \otimes e_{\pi, \delta}$ with $\alpha \in u_{\pi}\mathcal{O}_{\widehat{F}_{{\rm unr}}}[\![ u_{\pi} ]\!]$. 
Next replacing $-c_{\pi}(u^{-1})$ by $-c_{\pi}(u^{-1}) + (c_{i_{0}}u_{\pi}^{i_{0}} - u_{\pi}^{i_{0}} \pi^{v_{{\rm min}}} \alpha) \otimes e_{\pi, \delta}$. 
Then $i_0$ or $v_{{\rm min}}$ become bigger than before. 
Therefore carrying out above calculation inductively for $i_0$ unless $v_{{\rm min}}$ does not change, $S_{v_{{\rm min}}} = \sum_{(i_{0}, s)} \pi^{v_{{\rm \min}}} s u_{\pi}^{i_{0}}$ converges in $\RobbaB$ and the minimum valuation of coefficients of $-c_{\pi}(u^{-1}) - (r(\pi') \otimes \chi_{\pi}^{-1}(u^{-1})(S_{v_{{\rm min}}} \otimes e_{\pi, \delta}) - S_{v_{{\rm min}}} \otimes e_{\pi, \delta})$ is bigger than $v_{{\rm min}}$. 

Next we want to use induction for $v_{{\rm min}}$. In this case, there remains the problem of convergency of $\sum_{v_{{\rm min}}} S_{v_{{\rm min}}}$. 
We denote $c_{\pi}(u^{-1})$ by the form $\sum_{i} \pi^i d_i (u_{\pi})$ with $d_i (u_{\pi}) \in \mathcal{O}_{F}[\![ u_{\pi} ]\!] (\frac{1}{u_{\pi}})$ whose coefficints are in $\mathcal{O}_{F}^{\times}$. 
By the definition, we have $-{\rm deg}_{u_{\pi}^{-1}} S_{v_{{\rm min}}} \geq {\rm min}_{i \leq v_{{\rm min}} } (-{\rm deg}_{u_{\pi}^{-1}} d_{i})$. 
As a result $\sum S_{v_{{\rm min}}}$ is convergent by the convergency of $\sum_{i} \pi^i d_i (u_{\pi})$. 
\oqed\end{proof}

In summary of arguments in this section, we have the next theorem.

\begin{thm}
Let $S \in \RobbaB$ be the series in Lemma \ref{findseries}. 
Then we have $\mathbf{M}_{\pi, \pi'}(D_{\pi}^{\dag}) = \Robba{\pi'}(\delta) \oplus \Robba{\pi'}(1 \otimes f_{\pi} + S \otimes e_{\pi, \delta})$. 
In particular, Galois representation attached to $D_{\pi}^{\dag}$ is also $\pi'$-overconvergent. 
\end{thm}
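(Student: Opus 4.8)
The plan is to assemble the description of $\mathbf{M}_{\pi, \pi'}(D_{\pi}^{\dag})$ from the pieces already established earlier in the section. By Proposition \ref{modinducedbychar} we already know that $\Robba{\pi'}(\delta) = \mathbf{M}_{\pi, \pi'}(\Robba{\pi}(\delta))$ sits inside $\mathbf{M}_{\pi, \pi'}(D_{\pi}^{\dag})$ as a $\phiqGamma{\pi'}$-submodule, with distinguished basis $t \otimes e_{\pi, \delta}$, where $t \in \widehat{F}_{{\rm unr}}^{\times}$ is the unit produced there. The short exact sequence $0 \to \Robba{\pi}(\delta) \to D_{\pi}^{\dag} \to \Robba{\pi} \to 0$ becomes, after applying the left-exact functor $\mathbf{M}_{\pi, \pi'}$, an exact sequence $0 \to \Robba{\pi'}(\delta) \to \mathbf{M}_{\pi, \pi'}(D_{\pi}^{\dag}) \to \Robba{\pi'}$, so the only thing that remains is to show that the last map is surjective and to identify a lift of $1$; equivalently, to exhibit an element $f_{\pi'} \in \mathbf{M}_{\pi, \pi'}(D_{\pi}^{\dag})$ that maps to $1 \in \Robba{\pi'}$ and together with $t \otimes e_{\pi, \delta}$ forms a basis.

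First I would recall that $\mathbf{M}_{\pi, \pi'}(D_{\pi}^{\dag})$ is by Theorem \ref{Maintheorem} the subspace of $\RobbaB \otimes_{\mathbf{B}_{F, \pi}^{\dag}} D_{\pi}^{\dag}$ fixed by $r(\pi') \otimes \chi_{\pi}^{-1}(u^{-1})$. For an element of the form $1 \otimes f_{\pi} + S \otimes e_{\pi, \delta}$ with $S \in \RobbaB$, one computes — using $\varphi_{q}(f_{\pi}) = f_{\pi} + c_{\pi}(\pi)$ together with the semilinear action formulas, exactly as indicated just before Lemma \ref{findseries} — that the fixed-point condition is equivalent to the equation $r(\pi') \otimes \chi_{\pi}^{-1}(u^{-1})(S \otimes e_{\pi, \delta}) - S \otimes e_{\pi, \delta} = -c_{\pi}(u^{-1})$. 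This is precisely the equation solved in Lemma \ref{findseries}, so for the series $S$ produced there the element $f_{\pi'} := 1 \otimes f_{\pi} + S \otimes e_{\pi, \delta}$ lies in $\mathbf{M}_{\pi, \pi'}(D_{\pi}^{\dag})$ and maps to $1$ under the projection to $\Robba{\pi'}$. Since $t \otimes e_{\pi, \delta}$ maps to $0$ and is a generator of the rank-$1$ submodule $\Robba{\pi'}(\delta)$, the pair $\{t \otimes e_{\pi, \delta}, f_{\pi'}\}$ is linearly independent over $\RobbaB$, hence over $\Robba{\pi'}$, and by the exact sequence it is a $\Robba{\pi'}$-basis. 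This gives the direct sum decomposition $\mathbf{M}_{\pi, \pi'}(D_{\pi}^{\dag}) = \Robba{\pi'}(\delta) \oplus \Robba{\pi'} \cdot f_{\pi'}$ as $\Robba{\pi'}$-modules, and I would then check that $f_{\pi'}$ is stable (as a $\Robba{\pi'}$-line) under $\varphi_{q}$ and $\Gamma_{\pi'}$ modulo $\Robba{\pi'}(\delta)$, so that each summand is a $\phiqGamma{\pi'}$-submodule in the appropriate sense; this is a routine unwinding of the cocycle relations using the characterization of the reciprocity map, just as in the proof of Proposition \ref{modinducedbychar}.

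Finally, for the last sentence of the theorem I would invoke Proposition \ref{prop-overconvergentcatequivalence}(2): since $\mathbf{M}_{\pi, \pi'}(D_{\pi}^{\dag})$ is an object of $\catMod{\pi'}$ of rank $2$, equal to the rank of $D_{\pi}^{\dag}$, and by Proposition \ref{compatibilityofM} it corresponds to the same $F$-representation $V$ attached to $D_{\pi}^{\dag}$ whenever $V$ is also $\pi'$-overconvergent, the mere existence of $\mathbf{M}_{\pi, \pi'}(D_{\pi}^{\dag})$ as an \'etale $\phiqGamma{\pi'}$-module over $\Robba{\pi'}$ of full rank already forces $V$ to be $\pi'$-overconvergent, because $V \cong (\mathbf{B}^{\dag} \otimes_{\mathbf{B}_{F, \pi}^{\dag}} D_{\pi}^{\dag})^{\varphi_q = 1}$ and the same computation as in Proposition \ref{compatibilityofM} identifies $(\mathbf{B}^{\dag} \otimes_{F} V)^{H_{\pi'}}$ with $\mathbf{M}_{\pi, \pi'}(D_{\pi}^{\dag})$, which we have just shown is free of rank $2$ over $\Robba{\pi'}$.

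The main obstacle is not in this theorem itself — once Lemma \ref{findseries} is in hand, the argument is essentially bookkeeping — but rather lies upstream in the convergence analysis inside Lemma \ref{findseries}, namely controlling the $u_{\pi}^{-1}$-degrees of the partial sums $S_{v_{\mathrm{min}}}$ so that $\sum S_{v_{\mathrm{min}}}$ converges in $\RobbaB$ rather than merely in the completed ring; here one must track that the solutions $s$ produced by Lemma \ref{solveeq} do not worsen the polar part, using the expansion $[u^{-1}]_{\pi}(u_{\pi})^{-1} = u_{\pi}^{-1} u(1 + au_{\pi} + \cdots)$ to see that the pole order is preserved at each inductive step. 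Within the present theorem, the only point requiring genuine care is verifying that the $\varphi_q$- and $\Gamma_{\pi'}$-actions on $f_{\pi'}$ have entries in $\Robba{\pi'}$ — i.e. that the resulting module is genuinely defined over $\Robba{\pi'}$ and not just over $\RobbaB$ — but this follows because $f_{\pi'}$ and $t \otimes e_{\pi, \delta}$ are by construction $r(\pi')$-fixed, hence lie in $(\RobbaB \otimes D_{\pi}^{\dag})^{\mathrm{Gal}(F_{\mathrm{unr}, \infty}/F_{\infty, \pi'})} = \Robba{\pi'} \otimes_{\Robba{\pi}} D_{\pi}^{\dag}$ after passing to $H_{\pi'}$-invariants, and the $\varphi_q$, $\Gamma_{\pi'}$ actions preserve that space.
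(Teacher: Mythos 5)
Your proposal is correct and takes essentially the same route as the paper, which presents this theorem precisely as a summary of the section's arguments: $t \otimes e_{\pi, \delta}$ from Proposition \ref{modinducedbychar} spans the copy of $\Robba{\pi'}(\delta)$ inside $\mathbf{M}_{\pi, \pi'}(D_{\pi}^{\dag})$, the element $1 \otimes f_{\pi} + S \otimes e_{\pi, \delta}$ with $S$ from Lemma \ref{findseries} is fixed by $r(\pi') \otimes \chi_{\pi}^{-1}(u^{-1})$ and lifts $1 \in \Robba{\pi'}$, and the resulting rank-two freeness gives $\pi'$-overconvergence via Proposition \ref{prop-overconvergentcatequivalence}, exactly as indicated in the discussion preceding Lemma \ref{findseries}. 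One minor correction that does not affect the argument: the fixed-point equation for $S$ comes from the $\Gamma_{\pi}$-cocycle relation $\gamma(f_{\pi}) = f_{\pi} + c_{\pi}(\chi_{\pi}(\gamma))$ applied to the element with $\chi_{\pi}(\gamma) = u^{-1}$, not from $\varphi_{q}(f_{\pi}) = f_{\pi} + c_{\pi}(\pi)$ as you wrote.
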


\end{document}